\tikzset{node distance=2cm, auto}
\newcommand\Z{\mathbb Z}
\newcommand\Aut{\mathrm{Aut}}
\newcommand\Inn{\mathrm{Inn}}
\newcommand\Out{\mathrm{Out}}
\newcommand\Hom{\mathrm{Hom}}
\newcommand\Res{\mathrm{Res}}
\newcommand\Ind{\mathrm{Ind}}
\newcommand\Inf{\mathrm{Inf}}
\newcommand\Def{\mathrm{Def}}
\newcommand\Iso{\mathrm{Iso}}
\newcommand\restr[2]{\left.\kern-\nulldelimiterspace#1\right|_{#2}}
\newtheorem{theorem}{Theorem}[section]
\newtheorem{proposition}[theorem]{Proposition}
\newtheorem{corollary}[theorem]{Corollary}
\newtheorem{example}[theorem]{Example}
\begin{document}

\begin{center}

\textbf{Burnside Rings of Fusion Systems and their Unit Groups}

Jamison Barsotti // Rob Carman\\
Department of Mathematics\\
William \& Mary\\
jbbarsotti@wm.edu // wrcarman@wm.edu\\

\end{center}

\vspace{.5cm}

\textbf{Abstract.}
For a saturated fusion system $\mathcal F$ on a $p$-group $S$,
we study the Burnside ring of the fusion system $B(\mathcal F)$,
as defined by Matthew Gelvin and Sune Reeh,
which is a subring of the Burnside ring $B(S)$.
We give criteria for an element of $B(S)$ to be in $B(\mathcal F)$
determined by the $\mathcal F$-automorphism groups of essential subgroups of $S$.
When $\mathcal F$ is the fusion system induced by a finite group $G$ with $S$ as a Sylow $p$-group,
we show that the restriction of $B(G)$ to $B(S)$ has image equal to $B(\mathcal F)$.
We also show that for $p=2,$
we can gain information about the fusion system
by studying the unit group $B(\mathcal F)^\times.$
When $S$ is abelian, we completely determine this unit group.

\vspace{.5cm}

\section{Introduction}
Throughout this article we let $\mathcal F$ denote a saturated fusion system on a $p$-group $S$.
Following the work of Sune Reeh in \cite{Reeh},
we define the Burnside ring of $\mathcal F$ as a certain subring $B(\mathcal F)\subseteq B(S)$.
In that article, he shows that $B(\mathcal F)$
is free abelian, just as $B(S)$ is.
Moreover, he finds a canonical basis for $B(\mathcal F)$,
showing that its rank is equal to the number of $\mathcal F$-conjugacy classes of subgroups of $S$.
So the Burnside ring of $\mathcal F$ can detect the number of $\mathcal F$-classes,
but not much else has been studied concerning the structure of this ring and how it relates to $\mathcal F$.
We give a few examples to show that $B(\mathcal F)$ cannot determine $S$ up to isomorphism,
nor can it detect the $\mathcal F$-automorphism groups.
Also when $\mathcal F\subseteq\mathcal F'$ are two fusion systems on $S$,
the subrings $B(\mathcal F')\subseteq B(\mathcal F')\subseteq B(S)$ 
do not necessarily preserve proper inclusions.
However, when $p=2,$
much more about $\mathcal F$ can be deduced from the structure of $B(\mathcal F)$.
Also in this case, the unit group $B(\mathcal F)^\times$
can be used to determine much information about $\mathcal F$ and vice versa.
We see this as a first attempt towards determining $B(\mathcal F)^\times$
for a fusion system on a 2-group.
When $S$ is abelian we are able to do so.

In Section \ref{bgBurnside},
we give the necessary background on Burnside rings of finite groups.
The ghost ring of a Burnside ring, which we will use to complete some useful computations,
is explained here as well.
Also we define the notion of bisets for two finite groups
and show they how can be used to create maps between the relevant Burnside rings.
Then in Section \ref{bgFusion},
we recall the basics of fusion systems.
We attempt to describe only the notions we will need to use in later proofs.
For a thorough introduction to fusions systems,
see Part I of \cite{AKO}.

In Section \ref{BFusion},
we define $B(\mathcal F),$ the Burnside ring of a fusion system as we will use it here.
We state some known results about $B(\mathcal F)$
and also give some examples to demonstrate some drawbacks of attempting to study
$\mathcal F$ via $B(\mathcal F)$.

We describe actions of the $\mathcal F$-outer automorphism groups on
the restrictions of $B(S)$ and corresponding ghost rings in Section \ref{OutAction}.
Theorem \ref{stablecheck} then gives a new criteria
for checking if an element of $B(S)$ is contained in $B(\mathcal F)$.
Then in Section \ref{Units} we begin to study the unit group $B(\mathcal F)^\times$,
which is a subgroup of $B(S)^\times$, an elementary abelian 2-group.
We show that the $\mathcal F$-outer automorphism group actions
induce actions on the restrictions of $B(S)^\times,$
and thus get criteria for a unit of $B(S)$ to be in $B(\mathcal F)^\times$.
We also define trace maps between particular fixed-point subgroups of $B(S)^\times$
and show that they are often split surjective.

We focus on Frobenius fusion systems in Section \ref{Frobenius},
and show that if $\mathcal F$ is determined by a finite group $G$ with Sylow $p$-subgroup $S$,
then $B(\mathcal F)$ is exactly the image of the restriction map from $B(G)$ to $B(S)$.
This is the content of Theorem \ref{Boltje}.
Combining this result with results from Serge Bouc on $B(S)^\times$,
we can show that if $S$ is a normal Sylow 2-subgroup of $G$,
then $B(\mathcal F)^\times\cong B(G)^\times$.
More generally, we consider also the fusion system of $N=N_G(S)$ on $S$.
We can then transfer questions about the unit groups of $B(N)$ and $B(G)$
to questions about the unit groups of Burnside rings of the fusions systems on $S$
induced by $N$ and $G$, respectively.

In Section \ref{MaxUnits},
we detail particular units of $B(S)^\times$
indexed by the maximal subgroups of $S$.
We show that $\Aut(S)$ permutes these units
and use this to determine when such units are contained in $B(\mathcal F)^\times.$
We show this can also be used to determine strong fusion properties of these maximal subgroups.
And when $S$ is abelian,
we show in section \ref{Abelian}
how to completely determine $B(\mathcal F)^\times.$
The rank of this elementary abelian group is proved in Theorem \ref{abrank}.

\subsection{Notation}
For a group $G$, and an element $x\in G,$
we define the conjugation (inner) automorphism defined by $x$ as
$c_x:G\longrightarrow G,g\mapsto xgx^{-1}$.
We denote the set of automorphisms of $G$ by $\Aut(G),$
the inner automorphisms by $\Inn(G)$,
and the outer automorphism group by $\Out(G)=\Aut(G)/\Inn(G)$.
For a prime $p$, we let $\mathrm{Syl}_p(G)$ denote the set of Sylow $p$-subgroups of $G$.
When $H$ is a subgroup (resp. a proper subgroup, resp. a normal subgroup) of $G$,
we denote this by $H\leq G$ (resp. $H<G$, resp. $H\unlhd G$).
We denote the conjugate subgroup $c_x(H)$ by $\prescript{x}{}{H}$ or $H^{x^{-1}}$.
When $H,K\leq G$,
we denote by $\Hom_G(H,K)$ the set of all (injective) group homomorphisms
$H\longrightarrow K$ that are defined by conjugation by a fixed element of $G$.
And $\Aut_G(H)=\Hom_G(H,H)$ contains $\Inn(H)$,
so we can define $\Out_G(H)=\Aut_G(H)/\Inn(H)$.

\section{Background on Burnside Rings}\label{bgBurnside}
Throughout this section, we fix a finite group $G$.
If $X$ and $Y$ are two finite $G$-sets,
then so are their disjoint union $X\sqcup Y$ and direct product $X\times Y$.
We define the Burnside ring $B(G)$ to be the Grothendieck ring
of the category of finite (left) $G$-sets.
We denote the image of $X$ in $B(G)$ by $[X]$,
and so the operations in $B(G)$ are induced by
$[X]+[Y]=[X\sqcup Y]$ and $[X][Y]=[X\times Y]$.
Now $B(G)$ is a free abelian group
with a $\Z$-basis given by $[G/H]$,
as $H$ runs over a set of representatives of conjugacy classes of subgroups of $G$.
And in particular, $[G/G]$ is the multiplicative identity.

Now for any $G$-set $X$ and subgroup $H\leq G$,
we let $X^H$ denote the $H$-fixed points of $X$.
We then have a ring morphism $\phi_G:B(G)\longrightarrow\prod_{H\leq G}\Z$
such that $\phi_G([X])=(|X^H|)_{H\leq G}$ for any $G$-set $X$.
We see that $G$ acts on the codomain of this map by conjugation,
but the image of the map lies in the fixed points of this action since $X^{(\prescript{g}{}{H})}=gX^H$.
So we set $\tilde B(G):=\left(\prod_{H\leq G}\Z\right)^G$, which we call the ghost ring of $B(G)$.
And then we have the ghost map
$$\phi_G:B(G)\longrightarrow\tilde B(G),\quad a\mapsto (|a^H|)_{H\leq G},$$
which is well-known to be an injective ring morphism with finite cokernel.
We can see then that the unit group $B(G)^\times$ is isomorphic
to a subgroup of $\tilde B(G)^\times=\left(\prod_{H\leq G}\{\pm1\}\right)^G.$
Hence $B(G)^\times$ is simply a finite elementary abelian 2-group,
and a common problem is computing its rank as such.
One well-known result due to Dress is that when $G$ has odd order, $B(G)^\times=\{\pm1\}$.
This will be important later.

When $G$ and $H$ are two finite groups,
we define a $(G,H)$-biset to be a left $G$-set $U$ that is also a right $H$-set
such that $g(uh)=(gu)h$ for all $g\in G, u\in U, h\in H$.
If additionally $X$ is a left $H$-set, then $U\times X$ is a left $H$-set via $h(u,x)=(uh^{-1},hx)$.
We then define the tensor product $U\times_HX$
to be the set of $H$-orbits of $U\times X$ under this action.
The $H$-orbit of $(u,x)$ is denoted by $(u,_Hx)$,
and $G$ acts on $U\times_HX$ via $g(u,_Hx)=(gu,_Hx)$.
Then $U$ defines an additive function
$B(U):B(H)\longrightarrow B(G),[X]\mapsto[U\times_HX]$.
And there is a unique additive function $\tilde B(U):\tilde B(H)\longrightarrow\tilde B(G)$
such that $\phi_G\circ B(U)=\tilde B(U)\circ\phi_H$.
We describe $\tilde B(U)$ in more detail for particular bisets.
The proofs of these claims here can be found in \cite{C}, for instance.

When $H\leq G$,
we define the restriction $(H,G)$-biset $\Res^G_H=G$
with left-action given by multiplication by elements of $H$
and right-action given by multiplication by elements of $G$.
And we similarly define the induction $(G,H)-$biset $\Ind^G_H=G$
with multiplication from $G$ on the left and $H$ on the right.

When $N\unlhd G$,
we define the inflation $(G,G/N)$-biset $\Inf^G_{G/N}=G/N$
with right-action given by multiplication by elements of $G/N$
and left-action given by projecting elements of $G$ onto $G/N$ and the multiplying in $G/N$.
Similarly, we define the deflation $(G/N,G)$-biset $\Def^G_{G/N}=G/N$.

And when $\gamma:G\longrightarrow G'$ is an isomorphism of groups,
we define the transport by isomorphism $(G',G)$-biset $\Iso(\alpha)$ to be $G'$ with actions given by
$x\cdot y\cdot g=xy\gamma(g)$.
For these 5 types of elementary bisets,
we have their maps between ghost rings given by the following:
\begin{align*}
\tilde B(\Res^G_H)&:\tilde B(G)\longrightarrow\tilde B(H), &
(n_S)_{S\leq G}&\mapsto(n_T)_{T\leq H}\\
\tilde B(\Ind^G_H)&:\tilde B(H)\longrightarrow\tilde B(G), &
(n_T)_{T\leq H}&\mapsto\left(\sum_{g\in S\backslash G/H}n_{S^g\cap H}\right)_{S\leq G}\\
\tilde B(\Inf^G_{G/N})&:\tilde B(G/N)\longrightarrow\tilde B(G), &
(n_{X/N})_{X/N\leq G/N}&\mapsto(n_{SN/N})_{S\leq G}\\
\tilde B(\Def^G_{G/N})&:B(G)\longrightarrow\tilde B(G/N), &
(n_S)_{S\leq G}&\mapsto(n_X)_{X/N\leq G/N}\\
\tilde B(\Iso(\gamma))&:\tilde B(G)\longrightarrow\tilde B(G'), &
(n_S)_{S\leq G}&\mapsto(n_{\alpha^{-1}(S')})_{S'\leq G'}
\end{align*}
These operations are used to form what is called a biset functor,
but since we will not use this idea, we will not discuss further.
For any $H\leq G$,
we set $\mathrm{Indinf}^G_{N_G(H)/H}=\Ind^G_{N_G(H)}\times_{N_G(H)}\Inf^{N_G(H)}_{N_G(H)/H}$
and $\mathrm{Defres}^G_{N_G(H)/H}=\Def^{N_G(H)}_{N_G(H)/H}\times_{N_G(H)}\Res^G_{N_G(H)}$.

Also every $(G,H)$-biset $U$ defines a multiplicative map $B(H)\longrightarrow B(G)$
that maps the class of an $H$-set $X$ to the class of $\Hom_H(U^{op},X),$
the set of all function $f:U\longrightarrow X$ such that $f(uh)=h^{-1}f(u)$ for all $u\in U$ and $h\in H$.
This is a $G$-set via $(gf)(u)=f(g^{-1}u).$
So we have a group homomorphism $B(U)^\times:B(H)^\times\longrightarrow B(G)^\times$.
When $U$ is a restriction, inflation, deflation, or transport by isomorphism biset,
then $B(U)$ is a ring morphism and coincides with $B(U)^\times$.
This is not true however, for $\Ind^G_H$,
as $B(\Ind^G_H)$ is not multiplicative.
Slightly abusing notation, we denote $B(\Ind^G_H)^\times$ by $B(\mathrm{Ten}^G_H)$,
and refer to this group homomorphism as tensor induction.
Then $\tilde B(\mathrm{Ten}^G_H)$ is defined just as $\tilde B(\Ind^G_H)$,
except the sum indexed by elements of $G$ is replaced with a product.

\section{Background on Fusion Systems}\label{bgFusion}
We describe in this section the basics of fusion systems without too much explanation,
attempting to cover only what we will use later.
For more precise definitions/explanations, see \cite{AKO}, for instance.
Let $p$ be a prime number and $S$ a finite $p$-group.
A fusion system on $S$ is a category $\mathcal F$
whose objects are the subgroups of $S$
and whose morphisms are injective group homomorphisms such that for $P,Q\leq S,$
one has $\mathrm{Hom}_S(P,Q)\subseteq\mathrm{Hom}_{\mathcal F}(P,Q)$
and every $\varphi\in\mathrm{Hom}_{\mathcal F}(P,Q)$
is the composition of an isomorphism in $\mathcal F$ and an inclusion of subgroups of $S$.
For any $P\leq S$,
we set $\Aut_{\mathcal F}(P)=\Hom_{\mathcal F}(P,P)$.
This contains $\Inn(P)$, and so we can define $\Out_{\mathcal F}(P)=\Aut_{\mathcal F}(P)/\Inn(P)$.

Fusion systems are often too general to work with,
and so we work only with fusion systems that are said to be saturated.
A saturated fusion system satisfies a few additional axioms
that resemble the Sylow theorems.
We omit the precise assumptions here for brevity,
but will point out where they are necessary later.
The main source of fusion systems
comes from the case where $G$ is a finite group and $S\in\mathrm{Syl}_p(G)$.
We define the fusion system $\mathcal F_S(G)$ such that
$\Hom_{\mathcal F_S(G)}(P,Q)=\Hom_G(P,Q)$ for all $P,Q\leq S$.
We refer to a fusion system of this form as a Frobenius fusion system.
Such fusion systems are always saturated.
Saturated fusion systems that are not of this form are called exotic fusion systems.

If there exists an isomorphism in $\mathrm{Hom}_{\mathcal F}(P,Q)$,
then we say that $P$ and $Q$ are $\mathcal F$-conjugate,
and write $P\cong_{\mathcal F}Q$.
We denote the set of all such isomorphisms by $\mathrm{Iso}_{\mathcal F}(P,Q)$,
and let $P^{\mathcal F}$ denote the set of all $\mathcal F$-conjugates of $P$.
Similarly, if $x,y\in S$,
then we say they are $\mathcal F$-conjugate
if there exists a $\varphi\in\mathrm{Iso}_{\mathcal F}(\langle x\rangle,\langle y\rangle)$
such that $\varphi(x)=y$.
We let $x^{\mathcal F}$ denote the $\mathcal F$-conjugacy class of $x$.
A subgroup $P\leq S$ is said to be strongly closed in $\mathcal F$
if $x^{\mathcal F}\subseteq P$ for all $x\in P$.
And $P$ is said to be normal in $\mathcal F$, denoted by $P\unlhd\mathcal F$,
if for any $\varphi\in\mathrm{Hom}_{\mathcal F}(Q,R)$,
there exists a $\psi\in\mathrm{Hom}_{\mathcal F}(QP,RP)$
such that $\restr{\psi}{Q}=\varphi$ and $\psi(P)=P$.
Since $S$ is a $p$-group,
then so is $\Aut_S(P)$ for any $P\leq S$.
We call $P$ fully $\mathcal F$-automized if $\Aut_S(P)\in\mathrm{Syl}_p(\Aut_{\mathcal F}(P))$.

\section{Burnside Ring of a Fusion System}\label{BFusion}

Let $p$ be a prime number and fix an $S\in\mathrm{Syl}_p(G)$.
Let $\mathcal F_S(G)$ denote the Frobenius fusion system on $S$ induced by $G$.
Following \cite{GRY}, we define the Burnside ring of $\mathcal F_S(G)$
to be the subring $B(\mathcal F_S(G))$ of $B(S)$
consisting of the elements $a\in B(S)$ such that $|a^P|=|a^Q|$ whenever $P,Q$
are subgroups of $S$ that are conjugate in $G$.
More generally, for any saturated fusion system $\mathcal F$ on $S$,
we define the Burnside ring of $\mathcal F$
to be the subring $B(\mathcal F)$ of $B(S)$
consisting of the elements $a\in B(S)$ such that $|a^P|=|a^Q|$
whenever $P$ and $Q$ are isomorphic in $\mathcal F$.
The first main result about $B(\mathcal F)$ is due to Sune Reeh.

\begin{theorem}[\cite{Reeh}, Thm. A]
Let $\mathcal F$ be a saturated fusion system on a $p$-group $S$.
Then $B(\mathcal F)$ is free abelian
with $\Z$-rank equal to the number of $\mathcal F$-conjugacy classes
of subgroups of $S$.
\end{theorem}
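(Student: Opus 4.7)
The plan is to leverage the ghost map $\phi_S:B(S)\hookrightarrow\tilde B(S)$ introduced in Section \ref{bgBurnside}, which is an injective ring morphism with finite cokernel. First I would note that $B(\mathcal F)$, being a subgroup of the free abelian group $B(S)$, is automatically free abelian, so the only real content of the statement is the rank computation.

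Next I would introduce the subgroup $\tilde B(S)^{\mathcal F}\subseteq\tilde B(S)$ consisting of those tuples $(n_P)_{P\leq S}$ satisfying $n_P=n_Q$ whenever $P\cong_{\mathcal F}Q$. By the very definition of $B(\mathcal F)$, we have $B(\mathcal F)=\phi_S^{-1}(\tilde B(S)^{\mathcal F})$. The group $\tilde B(S)^{\mathcal F}$ is patently free abelian, with $\Z$-basis given by the indicator tuples of the $\mathcal F$-conjugacy classes of subgroups of $S$, and hence of rank equal to the number of such classes.

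To transfer this rank computation to $B(\mathcal F)$, I would tensor with $\Q$. Exactness of $-\otimes\Q$ gives $B(\mathcal F)\otimes\Q=(\phi_S\otimes\Q)^{-1}(\tilde B(S)^{\mathcal F}\otimes\Q)$, and because $\phi_S$ has finite cokernel, $\phi_S\otimes\Q$ is an isomorphism of $\Q$-vector spaces. Therefore the $\Q$-dimension, and hence the $\Z$-rank, of $B(\mathcal F)$ equals $\dim_\Q\tilde B(S)^{\mathcal F}\otimes\Q$, which is the number of $\mathcal F$-conjugacy classes of subgroups of $S$.

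A striking feature of this outline is that it never actually invokes the saturation hypothesis: the definition of $B(\mathcal F)$ together with the ghost ring machinery is enough for freeness and the rank count. The saturation axioms become essential only for Reeh's stronger assertion, which exhibits a \emph{canonical} basis of $B(\mathcal F)$ consisting of elements with nonnegative coefficients in the transitive basis $\{[S/P]\}$. The main obstacle in that direction --- which our plan sidesteps --- is producing, for each $\mathcal F$-conjugacy class $[P]_{\mathcal F}$, an integral element supported on that class whose fixed-point counts vanish off it, equivalently inverting the fixed-point matrix along each $\mathcal F$-orbit without introducing denominators; the Sylow-type axioms of saturation are what guarantee that this inductive construction remains integral.
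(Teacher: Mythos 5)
Your argument is correct, and it is worth noting that the paper itself offers no proof of this statement: it is quoted as Theorem A of Reeh's article and simply cited. So the comparison here is between your self-contained argument and Reeh's, not anything in the present text. Your route — identify $B(\mathcal F)$ as $\phi_S^{-1}\bigl(\tilde B(S)^{\mathcal F}\bigr)$, observe that a subgroup of the finitely generated free group $B(S)$ is free, and compute the rank by tensoring with $\Q$ (flatness preserves the kernel description of the preimage, and $\phi_S\otimes\Q$ is an isomorphism because the cokernel of $\phi_S$ is finite) — is a complete and elementary proof of the statement \emph{as quoted}, and your observation that saturation plays no role in it is accurate. What this buys is generality and brevity; what it loses is exactly what you flag at the end: Reeh's Theorem A is genuinely stronger, asserting that the monoid of $\mathcal F$-stable \emph{actual} $S$-sets is free, which yields a canonical basis $\{\alpha_P\}$ of $B(\mathcal F)$ indexed by $\mathcal F$-classes and consisting of honest $S$-sets (this basis is what the paper uses in its first example). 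That integrality/positivity statement is where the Sylow-type axioms of saturation enter, via an inductive construction of $\alpha_P$ with prescribed fixed points, and it does not follow from the rational dimension count. Since the theorem as stated in the paper only claims freeness and the rank, your proof suffices for it; just be aware that the cited source proves more, and the paper silently relies on that stronger form elsewhere.
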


This theorem also gives a canonical  basis of $B(\mathcal F)$
indexed by the $\mathcal F$-conjugacy classes of $S$,
but we use this result only in the next example, so we omit a general explanation.
We wish to explore what information Burnside rings of various fusions systems
can and cannot recover about the fusion systems and the underlying $p$-groups.
First we show that $B(\mathcal F)$ does not uniquely determine $S$ up to isomorphism.
This is not too surprising, however, since the Burnside ring of a group
cannot always determine the group's isomorphism type.
See \cite{GRLV}, for instance.

\begin{example}
Let $S=V_4$ be the Klein 4-group,
$T=C_4$ be the cyclic group of order 4,
and $G=A_4$, the alternating group of order 12.
Then $S\in\mathrm{Syl}_p(G)$,
and we can define the saturated fusion system $\mathcal F:=\mathcal F_S(G)$.
Now $S$ has $3$ distinct maximal subgroups of index $2$, but they are all conjugate in $G$.
Hence $\mathcal F$ has only 3 conjugacy classes,
and thus $B(\mathcal F)$ has a $\Z$-basis: $\alpha_1,\alpha_M,\alpha_S$,
where $M$ represents any of the maximal subgroups of $S$.
Also let $\mathcal F':=\mathcal F_T(T)$ be the trivial fusion system on $T$.
Then $B(\mathcal F')$ has a $\Z$-basis given by $\beta_1,\beta_N,\beta_T$,
where $N$ represents the unique maximal subgroup of $T$.
The tables of marks for the two fusion systems are given below:
\begin{center}
\begin{tabular}{c|ccc}
$\mathcal F$&1&M&S\\
\hline
$\alpha_1$&$4$&0&0\\
$\alpha_M$&$6$&2&0\\
$\alpha_S$&1&1&1
\end{tabular}
\hspace{1.5cm}
\begin{tabular}{c|ccc}
$\mathcal F'$&1&N&T\\
\hline
$\beta_1$&$4$&0&0\\
$\beta_N$&$2$&2&0\\
$\beta_T$&1&1&1
\end{tabular}
\end{center}
We define a map $B(\mathcal F)\longrightarrow B(\mathcal F')$
by $\alpha_1\mapsto\beta_1, \alpha_M\mapsto\beta_N+\beta_1,\alpha_S\mapsto\beta_T$,
and extend linearly.
Thus we have an isomorphism of free abelian groups of rank 3,
and we leave it to the reader to check that this is an isomorphism of rings.
Hence $B(\mathcal F_{V_4}(A_4)\cong B(\mathcal F_{C_4}(C_4))$.
\end{example}

So we only now consider the case with a single fixed $p$-group $S$
and two fusion systems $\mathcal F$ and $\mathcal F'$ on $S$.
If $\mathcal F\subseteq\mathcal F',$ then $B(\mathcal F')\subseteq B(\mathcal F)$.
But if $\mathcal F\subsetneq\mathcal F',$ this does not imply that
$B(\mathcal F')\subsetneq B(\mathcal F)$.

\begin{example}
Let $p$ be an odd prime, $S=C_p$, and $\mathcal F=\mathcal F_S(S)$ be the trivial fusion system on $S$.
Now $S$ is a Sylow 2-subgroup of $D_{2p}$, the dihedral group of order $2p$.
So let $\mathcal F'=\mathcal F_S(D_{2p})$.
Then $B(\mathcal F')=B(\mathcal F)=B(S)$
since these are all free abelian of rank 2, but $\mathcal F\subsetneq\mathcal F'$.
Notice that $\mathrm{Aut}_{\mathcal F}(S)=\mathrm{Inn}(S)$, which is trivial,
but $\mathrm{Aut}_{\mathcal F'}(S)=\mathrm{Aut}_{D_{2p}}(S)\cong C_2$,
as the elements of order 2 in $D_{2p}$ invert a generator of $S$ via conjugation.
\end{example}

More generally for an odd prime $p$, if $A\leq\Aut(C_p)\cong C_{p-1}$,
then $C_p\in\mathrm{Syl}_p(C_p\rtimes A)$,
and we have the fusion system $\mathcal F_{C_p}(C_p\rtimes A)$,
but its Burnside ring is exactly $B(C_p)$ no matter what $A$ is.
Notice this example shines no light on the $p=2$ case,
but we also present the following example.

\begin{example}
Let $S$ be the elementary abelian 2-group of rank 3.
Then $A:=\mathrm{Aut}(S)\cong GL(3,2).$
Let $T\in\mathrm{Syl}_7(A)$, and set $N:=N_A(T)$.
Then $T\cong C_7$ and $N\cong C_7\rtimes C_3$.
Notice $T$ transitively permutes the 7 involutions of $S$,
hence so does $N$.
Set $G:=S\rtimes N$ and $H:=S\rtimes T$.
Then in $H$, all the involutions of $S$ are fused,
hence also all the subgroups of $S$ of order 4 are fused.
The same is true for $G$.
Thus if we set $\mathcal F_1:=\mathcal F_S(H)$ and $\mathcal F_2:=\mathcal F_S(G)$,
we have $\mathcal F_1\subseteq\mathcal F_2$,
and there are 4 conjugacy classes of subgroups of $S$ (one per possible $2$-rank of subgroups)
in both $\mathcal F_1$ and $\mathcal F_2$.
Thus $B(\mathcal F_1)=B(\mathcal F_2)\cong\Z^4,$ as free abelian groups.
But we see that $\mathcal F_1\neq\mathcal F_2$
since $\mathrm{Aut}_{\mathcal F_1}(S)=\mathrm{Aut}_H(S)\cong N_H(S)/C_H(S)=(S\rtimes T)/S\cong T$
and $\mathrm{Aut}_{\mathcal F_2}(S)\cong N.$
\end{example}

The point here is that $B(\mathcal F)$ does not see the automorphism groups
$\mathrm{Aut}_{\mathcal F}(P)$ for $P\leq S$.
And these automorphism groups determine the fusion system as $P$
ranges over $S$ plus the essential subgroups of $\mathcal F$, due to Alperin's Fusion Theorem.
So, in general, $\mathcal F$ cannot be recovered from $B(\mathcal F)$ alone.
We are tempted to speculate that properly contained fusion systems on 2-groups
lead to properly contained unit groups for their Burnside rings,
but the following example shows that that is also not true.

\begin{example}
Let $S:=C_2^4$ be the elementary abelian 2-group of rank 4.
Then $\mathrm{Aut}(S)=GL(4,2)$ has subgroups $F\leq E$ with $F=C_7$ and $E=C_7\rtimes C_3$.
Set $\mathcal F_1:=\mathcal F_S(S\rtimes F)$ and $\mathcal F_2:=\mathcal F_S(S\rtimes E)$.
Then $\mathcal F_1\subseteq \mathcal F_2$.
There are 13 $\mathcal F_1$-classes, 3 of which contain maximal subgroups.
And there 11 $\mathcal F_2$-classes, 3 of which contain maximal subgroups.
Hence $B(\mathcal F_2)$ is a proper subring of $B(\mathcal F_1)$,
but $B(\mathcal F_1)^\times=B(\mathcal F_2)^\times$ since both
are elementary abelian 2-groups of rank 4.
\end{example}

\section{Outer Automorphism Action}\label{OutAction}
Let $\mathcal F$ be a saturated fusion system on a $p$-group $S$.
Then for every $P\leq S$, we have the automorphism group $\mathrm{Aut}(P)$,
which acts on $B(P)$ via ring morphisms in the following way:
If $\alpha\in\mathrm{Aut}(P)$, then we have the elementary bifree $(P,P)$-biset $\mathrm{Iso}(\alpha)$,
which defines a ring automorphism $B(\mathrm{Iso}(\alpha)):B(P)\longrightarrow B(P)$
such that $[X]\mapsto[\mathrm{Iso}(\alpha)\times_PX]$.
Since $\mathrm{Iso}(\alpha\circ\beta)\cong\mathrm{Iso}(\alpha)\times_P\mathrm{Iso}(\beta)$
for all $\alpha,\beta\in\mathrm{Aut}(P)$,
this defines an action of $\mathrm{Aut}(P)$ on $B(P)$ via ring automorphisms.
We denote the action simply by $\alpha\cdot b$ for all $\alpha\in\mathrm{Aut}(P)$ and $b\in B(P)$.
Notice that $\mathrm{Inn}(P)$ acts trivially,
So we can consider the action of $\mathrm{Out}(P)$ on $B(P)$.
We denote the image of $\alpha$ in $\mathrm{Out}(P)$ by $\bar\alpha$.

We also see that $\mathrm{Aut}(P)$ acts via ring automorphisms on the ghost ring $\tilde B(P)$ by
$$\tilde B(\mathrm{Iso}(\alpha)):\tilde B(P)\longrightarrow\tilde B(P),\quad
(b_Q)_{Q\leq P}\mapsto(b_{\alpha^{-1}(Q)})_{Q\leq P},$$
and we can see that $\phi_P(\alpha\cdot b)=\alpha\cdot\phi_P(b)$ for all $b\in B(P)$
(see \cite{C}, for example).
That is, the ghost map is $\mathrm{Aut}(P)$-equivariant.
But $\mathrm{Inn}(P)$ also acts trivially on $\tilde B(P)$,
and so we consider the $\mathrm{Out}(P)$-action on $\tilde B(P)$,
and note again that the ghost map is $\mathrm{Out}(P)$-equivariant.

For any subgroup $\Gamma\leq\mathrm{Out}(P)$,
we can consider the fixed point subrings
$B(P)^\Gamma\subseteq B(P)$ and $\tilde B(P)^\Gamma\subseteq\tilde B(P)$.
Since the ghost map is $\mathrm{Out}(P)$-equivariant and injective,
then for any $b\in B(P)$, we have $b\in B(P)^\Gamma$ iff $\phi_P(b)\in\tilde B(P)^\Gamma$.
We will mostly be interested in the subgroups
$\mathrm{Out}_S(P)\leq\mathrm{Out}_{\mathcal F}(P)\leq\mathrm{Out}(P)$,
which correspond to the subrings
$B(P)^{\mathrm{Out}_{\mathcal F}(P)}\subseteq B(P)^{\mathrm{Out}_S(P)}.$

Now for any $P\leq S$, we see that
the image of $B(\mathrm{Res}_P^S):B(S)\longrightarrow B(P)$
is contained inside $B(P)^{\mathrm{Out}_S(P)}$,
and we can also see that
$B(\mathrm{Res}_P^S)(B(\mathcal F))\subseteq B(P)^{\mathrm{Out}_{\mathcal F}(P)}$.
We show that this also gives sufficient conditions for an element of $B(S)$ to be in $B(\mathcal F)$.
Moreover, we only need to check for $S$ and $\mathcal F$-essential subgroups of $S$.

\begin{theorem}\label{stablecheck}
Let $\mathcal F$ be a saturated fusion system on a $p$-group $S$, and let $b\in B(S)$.
\begin{enumerate}
\item[(a.)] $b\in B(\mathcal F)$ iff $B(\mathrm{Res}^S_P)(b)\in B(P)^{\mathrm{Out}_{\mathcal F}(P)}$
for all $P\leq S$.
\item[(b.)] $b\in B(\mathcal F)$ iff $B(\mathrm{Res}^S_R)(b)\in B(R)^{\mathrm{Out}_{\mathcal F}(R)}$
for $R=S$ and every $\mathcal F$-essential $R<S$.
\end{enumerate}
\end{theorem}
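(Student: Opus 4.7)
The plan is to translate both sides of each equivalence into concrete numerical conditions via the ghost map, handle the easy direction immediately, and reduce the nontrivial direction to Alperin's fusion theorem.

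First I would rewrite the fixed-point condition at the ghost-ring level. Since $\phi_P\circ B(\Res^S_P)=\tilde B(\Res^S_P)\circ\phi_S$ sends $b$ to the tuple $(|b^T|)_{T\leq P}$, and the $\Out_{\mathcal F}(P)$-action on $\tilde B(P)$ sends $(v_T)_{T\leq P}$ to $(v_{\alpha^{-1}(T)})_{T\leq P}$, and since the ghost map is $\Out_{\mathcal F}(P)$-equivariant and injective, the condition $B(\Res^S_P)(b)\in B(P)^{\Out_{\mathcal F}(P)}$ is equivalent to the assertion that $|b^T|=|b^{\alpha(T)}|$ for every $T\leq P$ and every $\alpha\in\Aut_{\mathcal F}(P)$.

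The forward direction of both (a) and (b) is then immediate: if $b\in B(\mathcal F)$ and $\alpha\in\Aut_{\mathcal F}(P)$, then for any $T\leq P$ the restriction $\restr{\alpha}{T}$ belongs to $\Iso_{\mathcal F}(T,\alpha(T))$, so $|b^T|=|b^{\alpha(T)}|$ by the very definition of $B(\mathcal F)$.

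For the converse, I note that the hypothesis of (b) is a special case of that of (a), so it suffices to prove the converse of (b); this simultaneously gives both parts. Given two $\mathcal F$-conjugate subgroups $P,Q\leq S$ and an $\mathcal F$-isomorphism $\varphi\colon P\to Q$, I would apply Alperin's fusion theorem for saturated fusion systems to factor $\varphi$ as a composition $\restr{\alpha_n}{P_{n-1}}\circ\cdots\circ\restr{\alpha_1}{P_0}$, where $P_0=P$, $P_n=Q$, each $R_i$ is either $S$ or $\mathcal F$-essential, $\alpha_i\in\Aut_{\mathcal F}(R_i)$, and $P_{i-1}\leq R_i$ with $P_i:=\alpha_i(P_{i-1})$. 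The hypothesis applied at $R_i$, to the subgroup $P_{i-1}$ and the automorphism $\alpha_i$, gives $|b^{P_{i-1}}|=|b^{P_i}|$, and telescoping across $i=1,\dots,n$ yields $|b^P|=|b^Q|$, as required.

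I expect the only real point requiring care to be invoking the correct form of Alperin's fusion theorem, namely the version guaranteeing that every $\mathcal F$-isomorphism decomposes into restrictions of $\mathcal F$-automorphisms of $S$ and of $\mathcal F$-essential subgroups; this is standard (see Part~I of \cite{AKO}). Beyond that citation, the argument is essentially book-keeping via the ghost-ring equivariance already established.
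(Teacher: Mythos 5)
Your proposal is correct and follows essentially the same route as the paper: the forward direction via the ghost-ring/equivariance translation of the fixed-point condition, and the converse of part (b.) (which subsumes part (a.)) by factoring an $\mathcal F$-isomorphism through $\mathcal F$-automorphisms of $S$ and of essential subgroups using Alperin's fusion theorem, then telescoping the equalities $|b^{P_{i-1}}|=|b^{P_i}|$. No gaps.
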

\begin{proof}
We have already seen that the image under restriction of $B(\mathcal F)$
is in the $\mathrm{Out}_{\mathcal F}(P)$-fixed points of $B(P)$ for every $P\leq S$.
So we need only prove the second statement, and this will imply the first.
Assume that $B(\mathrm{Res}^S_R)(b)\in B(R)^{\mathrm{Out}_{\mathcal F}(R)}$
for $R=S$ and all $\mathcal F$-essential $R<S$.
And suppose that $\phi_S(b)=(|b^P|)_{P\leq S}$.
Let $P,Q\leq S$ such that $P$ and $Q$ are $\mathcal F$-conjugate.
We wish to show that $|b^P|=|b^Q|$.
Let us pick a $\varphi\in\mathrm{Iso}_{\mathcal F}(P,Q)$.
Then by Alperin's Fusion Theorem (see \cite{AKO}, Theorem 3.6),
there exists subgroups $P=P_0,P_1,\dots,P_k=Q$,
$R_i\geq\langle P_{i-1},P_i\rangle$ with $R_i=S$ or $R_i$ being $\mathcal F$-essential,
and $\varphi_i\in\mathrm{Aut}_{\mathcal F}(R_i)$ such that
$\varphi_i(P_{i-1})\leq P_i$ and $\varphi=(\varphi_k\mid_{P_{k-1}})\circ\cdots\circ(\varphi_1\mid_{P_0})$.
Since $\varphi$ is an isomorphism, the $P_i$ must all have the same order.
Hence $\varphi_i(P_{i-1})=P_i$ for $i=1,\dots,k$.

By the assumption, $B(\mathrm{Res}^S_{R_i})(b)\in B(R_i)^{\mathrm{Out}_{\mathcal F}(R_i)}$ for each $i$.
So $$(|b^U|)_{U\leq R_i}=\tilde B(\mathrm{Res}^S_{R_i})(\phi_S(b))
=\phi_{R_i}(B(\mathrm{Res}^S_{R_i})(b))
\in\phi_{R_i}(B(R_i)^{\mathrm{Out}_{\mathcal F}(R_i)})=\tilde B(R_i)^{\mathrm{Out}_{\mathcal F}(R_i)}.$$
Then since $\varphi_i\in\mathrm{Aut}_{\mathcal F}(R_i)$,
we have $\bar\varphi_i\in\mathrm{Out}_{\mathcal F}(R_i)$, and
$$(|b^U|)_{U\leq R_i}=\bar\varphi_i\cdot(|b^U|)_{U\leq R_i}
=(|b^{\varphi_i^{-1}(U)}|)_{U\leq R_i}.$$
In particular, $|b^{P_i}|=|b^{\varphi_i^{-1}(P_i)}|=|b^{P_{i-1}}|$ for $i=1,\dots,k$.
Putting these altogether, we have $|b^Q|=|b^P|$.
This shows that $b\in B(\mathcal F)$.
\end{proof}

\section{Units of $B(\mathcal F)$}\label{Units}

We now focus on units of $B(\mathcal F)$.
For any $p$-group $S$, the unit group $B(S)^\times$ is an elementary abelian 2-group,
e.g. an $\mathbb F_2$-vector space.
Thus $B(\mathcal F)^\times\leq B(S)^\times$ is also an $\mathbb F_2$-vector space.
We wish to compute $\mathrm{dim}_{\mathbb F_2}(B(\mathcal F)^\times)$.
When $p>2,$ then $S$ is an odd order group, and $B(S)^\times=\{\pm1\}$.
Then also $B(\mathcal F)^\times={\pm1}$ for any fusion system $\mathcal F$ on $S$.
So we are mostly interested here in the case when $p=2$,
though we will make statements for an arbitrary $p$ when applicable.

Since for $P\leq S$, we have $\mathrm{Out}(P)$ acting on $B(P)$ via ring automorphisms,
we have $\mathrm{Out}(P)$ acting on $B(P)^\times$ via group automorphisms.
Therefore $B(P)^\times$ is an $\mathbb F_2\mathrm{Out}(P)$-module.
For any $\Gamma\leq\mathrm{Out}(P)$,
we have the fixed points $(B(P)^\times)^\Gamma$,
which is then an $\mathbb F_2$-subspace of $B(P)^\times$.
And if $\Delta\leq\Gamma,$ then $(B(P)^\times)^\Gamma\leq(B(P)^\times)^\Delta$.
In particular, on the unit groups $B(\mathrm{Res}^S_P):B(S)^\times\longrightarrow B(P)^\times$
has its image inside $(B(P)^\times)^{\mathrm{Out}_S(P)}$, and
$B(\mathrm{Res}^S_P)(B(\mathcal F)^\times)\leq(B(P)^\times)^{\mathrm{Out}_{\mathcal F}(P)}$.
So restriction induces an $\mathbb F_2$-linear map
$$d^{\mathcal F}_P:B(S)^\times\longrightarrow
(B(P)^\times)^{\mathrm{Out}_S(P)}/(B(P)^\times)^{\mathrm{Out}_{\mathcal F}(P)},$$
with $B(\mathcal F)^\times\leq\mathrm{Ker}(d^{\mathcal F}_P)$ for every $P\leq S$.
We can similarly define $\tilde d^{\mathcal F}_P$ on the ghost ring level.
Now Theorem \ref{stablecheck} implies the following.

\begin{theorem}\label{kernels}
Let $\mathcal F$ be a saturated fusion system on a $p$-group $S$.
\begin{enumerate}
\item $B(\mathcal F)^\times=\displaystyle\bigcap_{P\leq S}\mathrm{Ker}(d^{\mathcal F}_P).$
\item $B(\mathcal F)^\times=\displaystyle\bigcap_R\mathrm{Ker}(d^{\mathcal F}_R),$
where $R$ runs over the subgroups of $S$ such that $R=S$ or $R$ is $\mathcal F$-essential.
\end{enumerate}
\end{theorem}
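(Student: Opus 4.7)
The plan is to reduce Theorem \ref{kernels} directly to Theorem \ref{stablecheck}, after two easy bookkeeping observations about units. All of the substantive work is already contained in Theorem \ref{stablecheck}; what remains is to translate its conclusion from ``$b \in B(\mathcal F)$'' into the kernel language of the maps $d^{\mathcal F}_P$.

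First I would verify that $B(\mathcal F)^\times = B(\mathcal F) \cap B(S)^\times$. The inclusion $\subseteq$ is immediate since $B(\mathcal F)$ is a subring of $B(S)$ sharing the same identity $1 = [S/S]$. For the reverse inclusion, since $B(S)^\times$ is an elementary abelian 2-group (so every unit is self-inverse), any $b \in B(\mathcal F) \cap B(S)^\times$ satisfies $b^{-1} = b \in B(\mathcal F)$, so $b \in B(\mathcal F)^\times$. Thus checking membership in $B(\mathcal F)^\times$ amounts to checking membership in $B(\mathcal F)$ for units of $B(S)$.

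Next I would record the following equivalence for any $P \leq S$ and any $b \in B(S)^\times$:
$$B(\Res^S_P)(b) \in B(P)^{\Out_{\mathcal F}(P)} \;\iff\; b \in \mathrm{Ker}(d^{\mathcal F}_P).$$
Since $B(\Res^S_P)$ is a ring homomorphism, $B(\Res^S_P)(b)$ is a unit of $B(P)$, and hence it lies in the subring $B(P)^{\Out_{\mathcal F}(P)}$ if and only if it lies in the unit subgroup $(B(P)^\times)^{\Out_{\mathcal F}(P)}$. Combined with the already-noted fact that $B(\Res^S_P)(b) \in (B(P)^\times)^{\Out_S(P)}$, this last condition is precisely that the class of $B(\Res^S_P)(b)$ in the quotient defining $d^{\mathcal F}_P$ vanishes.

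With these two observations in hand, both parts of the theorem follow by quoting Theorem \ref{stablecheck}: for a unit $b \in B(S)^\times$, one has $b \in B(\mathcal F)^\times$ iff $b \in B(\mathcal F)$ iff, by part (a) (respectively part (b)) of Theorem \ref{stablecheck}, the restriction $B(\Res^S_R)(b)$ lies in $B(R)^{\Out_{\mathcal F}(R)}$ for all $R \leq S$ (respectively for $R = S$ and every $\mathcal F$-essential $R < S$); by the second observation this is exactly the statement that $b$ lies in the corresponding intersection of kernels $\mathrm{Ker}(d^{\mathcal F}_R)$. I do not anticipate any real obstacle here, since the deep input (Alperin's Fusion Theorem, invoked inside Theorem \ref{stablecheck}) has already been absorbed. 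The one point worth flagging is the passage between ``lies in the fixed subring of $B(P)$'' and ``lies in the fixed subgroup of $B(P)^\times$,'' which is precisely where the elementary abelian 2-group structure of $B(S)^\times$ is essential.
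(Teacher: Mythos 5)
Your proposal is correct and takes essentially the same route as the paper, which states Theorem \ref{kernels} as an immediate consequence of Theorem \ref{stablecheck}; your two bookkeeping observations accurately supply the unstated translation into kernel language. One small correction of emphasis: the elementary abelian $2$-group structure (self-inverseness of units) is what you need for the inclusion $B(\mathcal F)\cap B(S)^\times\subseteq B(\mathcal F)^\times$, not for passing between the fixed subring of $B(P)$ and the fixed subgroup of $B(P)^\times$ --- that step is automatic because the $\mathrm{Out}(P)$-action on $B(P)^\times$ is just the restriction of the ring action.
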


Notice for $P=S$,
we have simply $\mathrm{Ker}(d^{\mathcal F}_S)=(B(S)^\times)^{\mathrm{Out}_{\mathcal F}(S)}$.
So in particular, when $\mathcal F$ has no proper essential subgroups,
$B(\mathcal F)^\times$ is completely determined by the action of $\mathrm{Out}_{\mathcal F}(S)$
on $B(S)^\times.$

\begin{corollary}
Let $\mathcal F$ be a saturated fusion system on a $p$-group $S$
such that $S$  has no proper $\mathcal F$-essential subgroups.
Then $B(\mathcal F)^\times=(B(S)^\times)^{\mathrm{Out}_{\mathcal F}(S)}$.
\end{corollary}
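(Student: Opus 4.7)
The plan is to deduce the corollary directly from Theorem \ref{kernels}(2). The hypothesis that $S$ has no proper $\mathcal F$-essential subgroups collapses the intersection in part (2) to the single term $R=S$, giving
$$B(\mathcal F)^\times = \mathrm{Ker}(d^{\mathcal F}_S).$$
All that remains is to identify this kernel with $(B(S)^\times)^{\Out_{\mathcal F}(S)}$.

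For that I would unwind the definition of $d^{\mathcal F}_S$. The restriction biset $\Res^S_S$ is simply $S$ acting on itself by multiplication on both sides, so $B(\Res^S_S)$ is the identity map on $B(S)$ and hence on $B(S)^\times$. Moreover $\Out_S(S)$ is trivial: every element of $\Aut_S(S)$ is conjugation by an element of $N_S(S)=S$, which is already inner, so $\Aut_S(S)=\Inn(S)$ and thus $\Out_S(S)=1$. Consequently $(B(S)^\times)^{\Out_S(S)} = B(S)^\times$, and
$$d^{\mathcal F}_S : B(S)^\times \longrightarrow B(S)^\times / (B(S)^\times)^{\Out_{\mathcal F}(S)}$$
is just the canonical projection onto the quotient. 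Its kernel is visibly $(B(S)^\times)^{\Out_{\mathcal F}(S)}$, yielding the desired equality.

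There is essentially no obstacle here: all the substantive work has been absorbed into Theorem \ref{stablecheck}, where Alperin's Fusion Theorem does the heavy lifting in reducing stability checks to the groups $R=S$ and $\mathcal F$-essential $R<S$. Once the hypothesis removes the essential subgroups from the picture, every $\mathcal F$-isomorphism factors through $\Aut_{\mathcal F}(S)$, and the corollary falls out by pure definition-chasing.
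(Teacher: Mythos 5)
Your argument is correct and is exactly the paper's: the corollary is obtained by specializing Theorem \ref{kernels}(2) to the case where the only relevant subgroup is $R=S$, together with the observation (made in the paper in the sentence preceding the corollary) that $\mathrm{Out}_S(S)$ is trivial and hence $\mathrm{Ker}(d^{\mathcal F}_S)=(B(S)^\times)^{\mathrm{Out}_{\mathcal F}(S)}$. Nothing to add.
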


Now if $P\leq S$ and $\Delta\leq \Gamma\leq\mathrm{Out}(P)$,
then we have $(B(P)^\times)^\Gamma\leq(B(P)^\times)^\Delta$ as $\mathbb F_2$-vector spaces,
and we have the familiar trace map
$$\mathrm{tr}_\Delta^\Gamma:(B(P)^\times)^\Delta\longrightarrow(B(P)^\times)^\Gamma,
\quad b\mapsto\prod_{\alpha\in\Gamma/\Delta}(\alpha\cdot b),$$
where $\alpha$ runs over any choice of coset representatives of $\Gamma/\Delta$.
This map is independent of the choice of representatives and is $\mathbb F_2$-linear,
i.e., a group homomorphism.
And since the ghost map is $\mathrm{Out}(P)$-equivariant,
we also have the extension to the unit groups of the ghost rings:
$\widetilde{\mathrm{tr}}^\Gamma_\Delta:
(\tilde B(P)^\times)^\Delta\longrightarrow(\tilde B(P)^\times)^\Gamma$.
We are most interested in the case where
$\Delta=\mathrm{Out}_S(P)$ and $\Gamma=\mathrm{Out}_{\mathcal F}(P)$.
In this case, we denote the relevant trace maps by simply
$\mathrm{tr}^{\mathcal F}_P$ and $\widetilde{\mathrm{tr}}^{\mathcal F}_P$
so that we have
$$\mathrm{tr}^{\mathcal F}_P:
(B(P)^\times)^{\mathrm{Out}_S(P)}\longrightarrow(B(P)^\times)^{\mathrm{Out}_{\mathcal F}(P)},
\quad b\mapsto\prod_{\alpha\in\mathrm{Out}_{\mathcal F}(P)/\mathrm{Out}_S(P)}(\alpha\cdot b)$$
and similarly for $\widetilde{\mathrm{tr}}^{\mathcal F}_P$.
Recall that the restriction map $B(\mathrm{Res}_P^S):B(S)^\times\longrightarrow B(P)^\times$
always has its image inside of $(B(P)^\times)^{\mathrm{Out}_S(P)}$,
so we can consider the composition
$$\mathrm{tr}_P^{\mathcal F}\circ B(\mathrm{Res}_P^S):
B(S)^\times\longrightarrow(B(P)^\times)^{\mathrm{Out}_{\mathcal F}(P)}.$$
Similarly, we have
$\widetilde{\mathrm{tr}}^{\mathcal F}_P\circ\tilde B(\mathrm{Res}^S_P):
\tilde B(S)^\times\longrightarrow(\tilde B(P)^\times)^{\mathrm{Out}_{\mathcal F}(P)},$
which commutes with the ghost maps.

\begin{proposition}\label{splitsurj}
Let $\mathcal F$ be a saturated fusion system on a 2-group $S$,
and suppose that $P\leq S$ is fully $\mathcal F$-automized.
Then $\mathrm{tr}^{\mathcal F}_P$ is split surjective, and
$$(B(P)^\times)^{\mathrm{Out}_S(P)}
=\mathrm{Ker}(\mathrm{tr}^{\mathcal F}_P)\times(B(P)^\times)^{\mathrm{Out}_{\mathcal F}(P)}.$$
\end{proposition}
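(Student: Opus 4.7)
The plan is to exploit the hypothesis that $P$ is fully $\mathcal{F}$-automized in combination with the fact that $B(P)^\times$ is an elementary abelian $2$-group. Being fully $\mathcal{F}$-automized means $\mathrm{Aut}_S(P) \in \mathrm{Syl}_p(\mathrm{Aut}_{\mathcal{F}}(P))$, and since $\mathrm{Inn}(P) \leq \mathrm{Aut}_S(P) \leq \mathrm{Aut}_{\mathcal{F}}(P)$, passing to the quotient by $\mathrm{Inn}(P)$ preserves both containment and index. Hence $\mathrm{Out}_S(P)$ is a Sylow $p$-subgroup of $\mathrm{Out}_{\mathcal{F}}(P)$, so the index $n := [\mathrm{Out}_{\mathcal{F}}(P):\mathrm{Out}_S(P)]$ is coprime to $p$. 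For $p = 2$ this means $n$ is odd.

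Next I would compute the trace on the target subgroup. For any $b \in (B(P)^\times)^{\mathrm{Out}_{\mathcal{F}}(P)}$, each factor $\alpha \cdot b$ in the defining product equals $b$, so $\mathrm{tr}^{\mathcal{F}}_P(b) = b^n$. Because $B(P)^\times$ has exponent $2$, raising to any odd power is the identity map, and thus $\mathrm{tr}^{\mathcal{F}}_P(b) = b$. In other words, the restriction of $\mathrm{tr}^{\mathcal{F}}_P$ to $(B(P)^\times)^{\mathrm{Out}_{\mathcal{F}}(P)}$ is the identity, so the natural inclusion $(B(P)^\times)^{\mathrm{Out}_{\mathcal{F}}(P)} \hookrightarrow (B(P)^\times)^{\mathrm{Out}_S(P)}$ is a section of $\mathrm{tr}^{\mathcal{F}}_P$. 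This already yields the split surjectivity statement.

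Finally I would deduce the direct product decomposition from the section. The intersection $\mathrm{Ker}(\mathrm{tr}^{\mathcal{F}}_P) \cap (B(P)^\times)^{\mathrm{Out}_{\mathcal{F}}(P)}$ is trivial, since $\mathrm{tr}^{\mathcal{F}}_P$ acts as the identity on the second factor. For any $b \in (B(P)^\times)^{\mathrm{Out}_S(P)}$, set $c := \mathrm{tr}^{\mathcal{F}}_P(b)$ and $k := bc$. Then $c \in (B(P)^\times)^{\mathrm{Out}_{\mathcal{F}}(P)}$, while $\mathrm{tr}^{\mathcal{F}}_P(k) = \mathrm{tr}^{\mathcal{F}}_P(b)\, \mathrm{tr}^{\mathcal{F}}_P(c) = c \cdot c = 1$ since $B(P)^\times$ is of exponent $2$, so $k$ lies in the kernel. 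As $b = kc$, the two subgroups generate everything, giving the claimed internal direct product.

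Really there is no main obstacle; once the Sylow-in-$\mathrm{Out}$ observation is made, the exponent-$2$ structure of $B(P)^\times$ trivializes the rest. The only mild subtlety is verifying that the Sylow condition descends cleanly from $\mathrm{Aut}$ to $\mathrm{Out}$, which is immediate because $\mathrm{Inn}(P)$ is contained in $\mathrm{Aut}_S(P)$.
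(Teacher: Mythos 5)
Your proposal is correct and follows essentially the same argument as the paper: the Sylow condition descends to $\mathrm{Out}$, so the index is odd and the trace restricts to the identity on $(B(P)^\times)^{\mathrm{Out}_{\mathcal F}(P)}$, giving the splitting. You simply spell out the direct product decomposition (trivial intersection plus $b = (b\cdot\mathrm{tr}^{\mathcal F}_P(b))\cdot\mathrm{tr}^{\mathcal F}_P(b)$) in more detail than the paper, which asserts it immediately.
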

\begin{proof}
Since $P$ is fully $\mathcal F$-automized, we have
$\mathrm{Aut}_S(P)\in\mathrm{Syl}_2(\mathrm{Aut}_{\mathcal F}(P))$.
Hence $\mathrm{Out}_S(P)\in\mathrm{Syl}_2(\mathrm{Out}_{\mathcal F}(P))$,
and $2\nmid[\mathrm{Out}_{\mathcal F}(P):\mathrm{Out}_S(P)]$.
Thus if $b\in(B(P)^\times)^{\mathrm{Out}_{\mathcal F}(P)}$,
then $\mathrm{tr}_P^{\mathcal F}(b)=b^{[\mathrm{Out}_{\mathcal F}(P):\mathrm{Out}_S(P)]}=b$.
So in this case, $\mathrm{tr}_P^{\mathcal F}$ is split surjective, and
$(B(P)^\times)^{\mathrm{Out}_S(P)}
=\mathrm{Ker}(\mathrm{tr}^{\mathcal F}_P)\times(B(P)^\times)^{\mathrm{Out}_{\mathcal F}(P)}.$
\end{proof}

Since $S$ itself is always fully automized in any saturated fusion system, we always have that
$\mathrm{tr}^{\mathcal F}_S:B(S)^\times\longrightarrow(B(S)^\times)^{\mathrm{Out}_{\mathcal F}(S)}$
is split surjective with
$$B(S)^\times
=\mathrm{Ker}(\mathrm{tr}^{\mathcal F}_S)\times(B(S)^\times)^{\mathrm{Out}_{\mathcal F}(S)},$$
and we notice that $B(\mathcal F)^\times\leq(B(S)^\times)^{\mathrm{Out}_{\mathcal F}(S)}$.

\section{The Frobenius Case}\label{Frobenius}
In this section, we consider the case of the Frobenius fusion system $\mathcal F=\mathcal F_S(G)$,
where $G$ is a finite group and $S\in\mathrm{Syl}_p(G)$.
We see that the restriction map
$B(\mathrm{Res}^G_S):B(G)\longrightarrow B(S)$ has its image inside $B(\mathcal F)$.
We first show that $B(\mathcal F)$ is exactly the image of $B(\mathrm{Res}^G_S)$.
This result was communicated to us by Robert Boltje,
based on the idea of the $\ast$-construction of characters
from Lluis Puig and Michel Brou\'{e} detailed in \cite{BP}.

\begin{theorem}\label{Boltje}
Let $G$ be a finite group, $S\in\mathrm{Syl}_p(G)$ for some prime $p$,
and $\mathcal F=\mathcal F_S(G)$, the Frobenius fusion system on $S$ from $G$.
Then the image of $B(\mathrm{Res}^G_S):B(G)\longrightarrow B(S)$ is $B(\mathcal F)$.
\end{theorem}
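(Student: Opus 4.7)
The forward containment $B(\Res^G_S)(B(G)) \subseteq B(\mathcal{F})$ is immediate: for a $G$-set $X$ and $\mathcal{F}$-conjugate subgroups $P, Q \leq S$, the identity $\mathcal{F} = \mathcal{F}_S(G)$ makes $P$ and $Q$ conjugate in $G$, so any $g \in G$ with ${}^g P = Q$ induces a bijection $X^P \to X^Q$ via $x \mapsto gx$, and in particular $|X^P| = |X^Q|$. Thus the essential content of the theorem is the reverse containment.

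For the reverse containment, my plan is to construct, for each $b \in B(\mathcal{F})$, an explicit preimage in $B(G)$, following the $\ast$-construction of Brou\'e--Puig alluded to in the introduction. Work at the ghost-ring level: $b$ is encoded by its ghost tuple $(m_P)_{P \leq S} := \phi_S(b)$, which is constant on $\mathcal{F}$-conjugacy classes. I aim to extend this to a $G$-stable tuple $(n_H)_{H \leq G} \in \tilde B(G)$ with $n_P = m_P$ for $P \leq S$ and with $(n_H) \in \phi_G(B(G))$. For $H \leq G$ a $p$-subgroup, Sylow's theorem together with $\mathcal{F}$-stability of $(m_P)$ forces $n_H := m_P$ for any $P \leq S$ with $H \sim_G P$, and makes this value well-defined. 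For arbitrary $H \leq G$, the Brou\'e--Puig recipe for characters reduces the value to the $p$-part of $H$ via an averaging formula; I would transport this prescription to the Burnside ring to define $n_H$ uniformly.

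The principal obstacle is verifying that the resulting tuple $(n_H)_{H \leq G}$ satisfies Dress's integrality congruences that cut out $\phi_G(B(G))$ inside $\tilde B(G)$. For congruences attached to a prime $q \neq p$, the reduction to $p$-parts should make the relevant test values agree and render the congruences automatic; for $q = p$, they should reduce to $\mathcal{F}$-stability statements that hold by hypothesis, with Alperin's fusion theorem (which already played the central role in Theorem~\ref{stablecheck}) organizing the local checks around essential subgroups. A complementary inductive verification is available through the Mackey formula
$$B(\Res^G_S)([G/P]) = \sum_{g \in S \backslash G / P} [S/(S \cap {}^g P)], \qquad P \leq S,$$
whose terms with ${}^g P \leq S$ contribute $[S/Q]$ for $Q \in [P]_{\mathcal{F}}$ with a Sylow-counting multiplicity on each $S$-class, while the remaining terms involve only subgroups of strictly smaller order. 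Combined with the triangular character of Reeh's basis of $B(\mathcal{F})$ with respect to subgroup order, this supports a downward induction on $|P|$ realizing each basis element of $B(\mathcal{F})$ in the image of $B(\Res^G_S)$. In either approach, the delicate point is controlling the Sylow-counting coefficients so that lifts remain $\mathbb{Z}$-integral rather than merely $\mathbb{Q}$-rational.
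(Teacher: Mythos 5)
Your overall strategy is the same as the paper's: the proof there also follows the Brou\'e--Puig $\ast$-construction, defining for $b\in B(\mathcal F)$ a ghost tuple over $G$ by $n_H:=|b^P|$ for $P\in\mathrm{Syl}_p(H)$ with $P\leq S$ (well-defined exactly as you argue), and then verifying the Dress--Boltje integrality congruences $\sum_{H\leq I\leq Q}\mu(H,I)\,n_I\equiv 0\pmod{[Q:H]}$ for $Q/H\in\mathrm{Syl}_q(N_G(H)/H)$. For $q\neq p$ your reasoning is essentially complete: every $I$ in the interval $[H,Q]$ shares a Sylow $p$-subgroup with $H$, so $n_I$ is constant on the interval and the M\"obius sum vanishes unless $H=Q$.

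The genuine gap is at $q=p$, which is the heart of the theorem and is left unproved in your proposal. There the values $n_I$ vary across $[H,Q]$, and the mechanism you suggest --- that the congruences ``reduce to $\mathcal F$-stability statements'' organized by Alperin's fusion theorem --- does not work: $\mathcal F$-stability has already been spent on making $n_H$ well-defined, and Alperin's theorem only controls fusion among subgroups of $S$; it says nothing about divisibility of M\"obius sums over intervals $[H,Q]$ where $H$ need not be a $p$-group. The paper's actual mechanism is different and essential. One chooses $P\in\mathrm{Syl}_p(Q)$ and conjugates it into $S$ to obtain $\hat P_0\unlhd\hat P\leq S$ with $Q/H\cong\hat P/\hat P_0$; this isomorphism identifies the poset interval $[H,Q]$ with $[\hat P_0,\hat P]$, sending each $I$ to a Sylow $p$-subgroup $P_I\leq S$ of $I$. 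Then $n_I=|y^{P_I/\hat P_0}|$ where $y:=B(\mathrm{Defres}^S_{\hat P/\hat P_0})(b)$ is an honest element of $B(\hat P/\hat P_0)$, so the congruence for $[H,Q]$ is precisely the standard Burnside-ring congruence for $y$ at the trivial subgroup of $\hat P/\hat P_0$. The key point you are missing is that one must use that $b$ is an actual element of $B(S)$ (not merely that its mark tuple is $\mathcal F$-stable) and transport it by deflation--restriction to the quotient $p$-group. Your fallback route via the Mackey formula and downward induction on Reeh's basis founders on exactly the integrality problem you name: as Reeh's Example~4.3 (cited after the theorem) shows, the lifts cannot in general be taken to be genuine $G$-sets, so the Sylow-counting multiplicities do not cancel without an argument equivalent to the congruence check above. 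Flagging that difficulty does not resolve it.
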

\begin{proof}
To show this, we will create an inective ring morphism $t^G_S:B(\mathcal F)\longrightarrow B(G)$
such that $B(\mathrm{Res}^G_S)\circ t^G_S$ is the identity.
First, for any $a\in B(G)$ and $b\in B(\mathcal F)$,
we create an element $a\ast b\in\tilde B(G)$ in the following way:
For any $H\leq G,$ we set $|(a\ast b)^H|=|a^H||b^P|$,
where $P$ is any Sylow $p$-subgroup of $H$ contained in $S$.
Since $b\in B(\mathcal F),$ this does not depend on a choice of $P$.

We wish to show that in fact $a\ast b\in\phi_G(B(G))$.
By \cite{Boltje}, Corollary 2.5, it suffices to show
for any $H\leq G,$ any prime $q$, and any $Q/H\in\mathrm{Syl}_q(N_G(H)/H)$,
that the congruence
$$\sum_{H\leq I\leq Q}\mu(H,I)|(a\ast b)^I|\equiv0\pmod{[Q:H]}$$
holds, where $\mu$ is the M\"{o}bius function on the poset of subgroups of $G$.
We distinguish two cases:

When $p\neq q$ and $Q/H\in\mathrm{Syl}_q(N_G(H)/H),$
we choose a fixed $P\in\mathrm{Syl}_p(H)$ that is contained in $S$.
Then for $H\leq I\leq Q,$ also $P$ is a Sylow $p$-subgroup of $I$.
Hence we have
$$\sum_{H\leq I\leq Q}\mu(H,I)|(a\ast b)^I|
=\sum_{H\leq I\leq Q}\mu(H,I)|a^I||b^P|
=|b^P|\sum_{H\leq I\leq Q}\mu(H,I)|a^I|.$$
But since $a\in B(G),$ then $\sum_{H\leq I\leq Q}\mu(H,I)|a^I|\equiv0\pmod{[Q:H]},$
and thus multiplying by $|b^P|,$ the original sum is still congruent to 0.

When $p=q$ and $Q/H\in\mathrm{Syl}_p(N_G(H)/H),$
we pick a $P\in\mathrm{Syl}_p(Q)$ and a $g\in G$ such that $\prescript{g}{}{P}\leq S$.
If we set $\hat P:=\prescript{g}{}{P}$ and $\hat P_0:=\prescript{g}{}{(P\cap H)},$
then we have $\hat P_0\unlhd\hat P\leq S$
and an isomorphism $\alpha: Q/H\longrightarrow \hat P/\hat P_0$.
Then $\alpha$ induces an isomorphism between the poset intervals
$[H,Q]$ and $[\hat P_0,\hat P]$.
For any subgroup $I$ in the interval $[H,Q],$
let $P_I$ denote the corresponding subgroup in the interval $[\hat P_0,\hat P]$.
Then also $P_I$ is a Sylow $p$-subgroup of $I$ contained in $S$.
Now set $\bar a:=B(\mathrm{Defres}^G_{Q/H})(a)\in B(Q/H)$ and then
$x:=B(\mathrm{Iso}(\alpha))(\bar a)\in B(\hat P/\hat P_0)$.
Next, let $y:=B(\mathrm{Defres}^S_{\hat P/\hat P_0})(b)\in B(\hat P/\hat P_0)$.
Then for every $H\leq I\leq Q$, we have
$$|(a\ast b)^I|
=|a^I||b^{P_I}|
=|\bar a^{I/H}||y^{P_I/\hat P_0}|
=|x^{P_I/\hat P_0}||y^{P_I/\hat P_0}|
=|(xy)^{P_I/\hat P_0}|.$$
And since $xy\in B(\hat P/\hat P_0),$ we have
$$0\equiv\sum_{1\leq P_I/\hat P_0\leq\hat P/\hat P_0}\mu(1,P_I/\hat P_0)|(xy)^{P_I/\hat P_0}|
=\sum_{H\leq I\leq Q}\mu(H,I)|(a\ast b)^I|\pmod{[Q:H]},$$
since $[Q:H]=[\hat P:\hat P_0]$.
This shows that in fact $a\ast b\in\phi_G(B(G))$.

Now (with slightly abusing notation),
for any $a\in B(G),$ and $b\in B(\mathcal F),$
we let $a\ast b$ denote the unique element of $B(G)$ such that
$\phi_G(a\ast b)=(|a^H||b^P|)_{H\leq G}$
where $P$ denotes a Sylow $p$-subgroup of $H$ contained in $S$.
It is easy to see that $a\ast b$ is additive in both $a$ and $b$.
Hence we have a map
$$B(G)\otimes_\Z B(\mathcal F)\longrightarrow B(G),\quad a\otimes b\mapsto a\ast b.$$
Moreover, one has
$B(\mathrm{Res}^G_S)(a\ast b)=B(\mathrm{Res}^G_S)(a)b$ in $B(\mathcal F)$
and $1\ast(bc)=(1\ast b)(1\ast c)$ for $b,c\in B(\mathcal F)$.
In particular, the map
$$t^G_S:B(\mathcal F)\longrightarrow B(G),\quad b\mapsto1\ast b$$
is an injective ring homomorphism such that $B(\mathrm{Res}^G_S)\circ t^G_S$
is the identity on $B(\mathcal F)$.
This shows that $B(\mathrm{Res}^G_S)(B(G))=B(\mathcal F)$ exactly.
\end{proof}

This result is somewhat surprising since we know that not every $\mathcal F_S(G)$-stable $S$-set
comes from the restriction of a $G$-set in general.
In \cite{Reeh}, Example 4.3, Sune Reeh
shows that the transitive $\mathcal F_{D_8}(S_5)$-stable $D_8$-set $D_8/1$
cannot be the restriction of any $S_5$-set.
In light of this previous theorem,
this means that $[D_8/1]=B(\mathrm{Res}^{S_5}_{D_8})(a)$ for some virtual $a\in B(S_5)$,
but $[D_8/1]\neq B(\mathrm{Res}^{S_5}_{D_8})([X])$ for any actual $S_5$-set $X$.

Now in the case where $\mathcal F=\mathcal F_S(G),$
then for any $P\leq S,$ we have
$\mathrm{Out}_{\mathcal F}(P)
=\mathrm{Out}_G(P)
=\mathrm{Aut}_G(P)/\mathrm{Inn}(P)
\cong N_G(P)/(PC_G(P))$.
Clearly the restriction map $B(\mathrm{Res}^G_S)$ has image inside
the $\mathrm{Out}_G(S)$-fixed points.
And in particular when $S$ is a normal Sylow 2-subgroup of $G$,
Serge Bouc proved the following result on the unit groups.

\begin{proposition}[\cite{Bouc}, Prop. 6.5]\label{Boucnorm}
Let $S\in\mathrm{Syl}_2(G)$ with $S\unlhd G$.
Then $B(G)^\times\cong(B(S)^\times)^{\mathrm{Out}_G(S)}$
with mutually inverse isomorphisms given by restriction and tensor induction.
\end{proposition}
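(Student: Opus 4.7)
The plan is to construct mutually inverse isomorphisms between $B(G)^\times$ and $(B(S)^\times)^{\mathrm{Out}_G(S)}$ using restriction and tensor induction, and verify the two composites by direct ghost-ring calculation. By Theorem \ref{Boltje}, $\mathrm{Res} := B(\mathrm{Res}^G_S)$ sends $B(G)$ onto $B(\mathcal F)$, so it carries $B(G)^\times$ into $B(\mathcal F)^\times$, and $\mathcal F$-stability of these units places them inside $(B(S)^\times)^{\mathrm{Out}_G(S)}$. Tensor induction $\mathrm{Ten} := B(\mathrm{Ten}^G_S)$ is, by construction, a group homomorphism $B(S)^\times \to B(G)^\times$, so it restricts to a map $(B(S)^\times)^{\mathrm{Out}_G(S)} \to B(G)^\times$.

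The first composite is easy. For $u \in (B(S)^\times)^{\mathrm{Out}_G(S)}$ and $P \leq S$, normality of $S$ forces $P^g \leq S$ and $PgS = Sg$ for every $g \in G$, so $|P \backslash G / S| = [G:S]$. Using $G$-invariance of $u$:
$$
\bigl(\widetilde{\mathrm{Res}} \circ \widetilde{\mathrm{Ten}}(u)\bigr)_P
= \prod_{g \in P \backslash G/S} u_{P^g \cap S}
= u_P^{[G:S]}
= u_P,
$$
since $[G:S]$ is odd. In the other direction, the identity $(H \cap S)^g = H^g \cap S$ (again by normality of $S$) together with $|H \backslash G/S| = [G:HS]$, a divisor of the odd number $[G:S]$, yields
$$
\bigl(\widetilde{\mathrm{Ten}} \circ \widetilde{\mathrm{Res}}(x)\bigr)_H
= \prod_{g \in H \backslash G/S} x_{(H \cap S)^g}
= x_{H \cap S}^{[G:HS]}
= x_{H \cap S}.
$$
So $\mathrm{Ten} \circ \mathrm{Res} = \mathrm{id}$ on $B(G)^\times$ reduces to the key lemma $x_H = x_{H \cap S}$ for every $x \in B(G)^\times$ and every $H \leq G$.

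To prove this lemma I would restrict $x$ to $H$ and reduce to the following: if $H$ is any finite group with normal Sylow $2$-subgroup $P$, then $y_H = y_P$ for every $y \in B(H)^\times$. The argument is an induction on $[H:P]$, with trivial base. For the inductive step, $H/P$ is a nontrivial odd-order, hence solvable, group, so there exists $H_1$ with $P \leq H_1 \unlhd H$ and $H/H_1 \cong C_q$ for some odd prime $q$. Because $H_1 \unlhd H$, we have $N_H(H_1)/H_1 = H/H_1 \cong C_q$, which is its own Sylow $q$-subgroup, and the Dress congruence (\cite{Boltje}, Cor.\ 2.5) applied to $y$ at $H_1$ with $Q = H$ collapses to
$$
y_{H_1} - y_H \equiv 0 \pmod q,
$$
forcing $y_{H_1} = y_H$ since both values lie in $\{\pm 1\}$ and $q \geq 3$. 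Applying the inductive hypothesis to $B(\mathrm{Res}^H_{H_1})(y) \in B(H_1)^\times$, for which $P$ is still the normal Sylow $2$-subgroup and $[H_1:P] < [H:P]$, gives $y_{H_1} = y_P$, and the induction closes.

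The main obstacle is precisely this lemma: a Dress-type assertion that a unit of $B(G)$ is determined on each subgroup by its value on the Sylow $2$-subgroup of that subgroup. Once it is in hand, everything else is routine bookkeeping with double cosets, the oddness of $[G:S]$, and the exponent-$2$ structure of $B(G)^\times$.
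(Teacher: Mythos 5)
The paper does not prove this statement at all --- it is quoted verbatim from Bouc (\cite{Bouc}, Prop.\ 6.5) --- so there is nothing internal to compare against; what you have written is a self-contained proof of a cited result, and it is correct. The double-coset bookkeeping is right: normality of $S$ collapses $P\backslash G/S$ to the $[G:S]$ cosets of $S$ and $H\backslash G/S$ to the $[G:HS]$ cosets of $HS$, both odd, so everything reduces to your key lemma $x_H=x_{H\cap S}$ for $x\in B(G)^\times$, which (with $P=H\cap S$ the normal Sylow $2$-subgroup of $H$) you establish by induction using the length-two Dress congruence $y_{H_1}-y_H\equiv 0\pmod q$. Two remarks. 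First, your inductive step leans on the solvability of the odd-order group $H/P$, i.e.\ on Feit--Thompson; this is legitimate and is in fact how the odd-order unit theorem is usually obtained, but it is worth flagging as the one deep input. Second, the lemma can be reached a bit faster from a fact the paper already quotes in Section~\ref{bgBurnside}: apply the multiplicative deflation $B(\Def^H_{H/P})^\times$ (fixed points under $P$) to $B(\Res^G_H)(x)$ to get a unit $z\in B(H/P)^\times$ with $z_{I/P}=x_I$ for $P\leq I\leq H$; since $|H/P|$ is odd, Dress's theorem forces $z$ to be constant, giving $x_H=x_P$ at once. Also, your appeal to Theorem~\ref{Boltje} for the containment $B(\Res^G_S)(B(G)^\times)\subseteq(B(S)^\times)^{\Out_G(S)}$ is unnecessary --- that containment is the elementary $G$-invariance of marks noted in Section~\ref{OutAction} --- and avoiding it keeps the proof independent of the rest of the paper.
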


Hence if $S$ is a normal Sylow 2-subgroup of $G$,
the previous results give two ways to describe the image of restriction on units:
We have $B(\mathcal F)^\times=(B(S)^\times)^{\mathrm{Out}_G(S)}\cong B(G)^\times$.
Moreover, in this case, $t^G_S=B(\mathrm{Ten}^G_S)$ on $B(\mathcal F)^\times$.

More generally, if $S\in\mathrm{Syl}_2(G)$,
then we have $S\in\mathrm{Syl}_2(N_G(S))$ and $S\unlhd N_G(S)$.
So let us set $N:=N_G(S)$ and $\mathcal F':=\mathcal F_S(N)$.
Then we have $\mathcal F'\subseteq\mathcal F$ and therefore $B(\mathcal F)\subseteq B(\mathcal F')$
and $B(\mathcal F)^\times\leq B(\mathcal F')^\times$.
But from the above, we have
$B(\mathcal F')^\times=(B(S)^\times)^{\mathrm{Out}_N(S)}\cong B(N)^\times$.
Notice that
$$\mathrm{Out}_{\mathcal F'}(S)=\mathrm{Out}_N(S)
=\mathrm{Out}_G(S)=\mathrm{Out}_{\mathcal F}(S),$$
and $B(\mathcal F)^\times\leq B(\mathcal F')^\times=(B(S)^\times)^{\mathrm{Out}_G(S)}$.
(It is not necessarily true
that $\mathrm{Out}_{\mathcal F}(P)=\mathrm{Out}_{\mathcal F'}(P)$
for $P<S$.)
Since $B(\mathrm{Res}^N_S):B(N)^\times\longrightarrow B(\mathcal F')^\times$
is an isomorphism,
$B(\mathrm{Res}^G_S)(B(G)^\times)=B(\mathcal F)^\times,$
and $B(\mathrm{Res}^G_S)=B(\mathrm{Res}^N_S)\circ B(\mathrm{Res}^G_N),$
we see that $B(\mathrm{Res}^G_N)(B(G)^\times)\cong B(\mathcal F)^\times$.
So in general, we have the following commutative diagram,
where the unlabeled arrows are all restriction maps:
\begin{center}
\begin{equation}\label{jamisonspic}
\begin{tikzcd}
& B(G)^\times\ar[dl, two heads] \ar[dr, hookleftarrow, "t^G_S"]&\\
B(\mathrm{Res}^G_N)(B(G)^\times)\ar[rr,"\sim"]
\ar[d, phantom, "\rotatebox{-90}{$\leq$}"]&&
B(\mathcal F_S(G))^\times\ar[d, phantom, "\rotatebox{-90}{$\leq$}"]\\
B(N)^\times\ar[rr,"\sim"]&&B(\mathcal F_S(N))^\times
\end{tikzcd}
\end{equation}
\end{center}
We then collect a few results that can be read from Diagram \ref{jamisonspic}.

\begin{proposition}\label{normalizercon}
Suppose $G$ is a finite group and $S\in\mathrm{Syl}_2(G)$.
Set $N:=N_G(S), \mathcal F=\mathcal F_S(G)$ and $\mathcal F'=\mathcal F_S(N)$.
The following are equivalent:
\begin{enumerate}
\item[(i)] There is an embedding $t^G_N:B(N)^\times\longrightarrow B(G)^\times$
such that $B(\mathrm{Res}^G_N)\circ t^G_N=\mathrm{Id}_{B(N)^\times}$.
\item[(ii)] The map $B(\mathrm{Res}^G_N):B(G)^\times\longrightarrow B(N)^\times$
is split surjective.
\item[(iii)] The image of $B(\mathrm{Res}^N_S):B(N)^\times\longrightarrow B(S)^\times$
is $B(\mathcal F)^\times$.
\item[(iv)] The containment $B(\mathcal F)^\times\leq B(\mathcal F')^\times$
is an equality.
\end{enumerate}
\end{proposition}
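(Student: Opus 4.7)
The plan is to observe that Diagram (\ref{jamisonspic}) already encodes every ingredient needed, so the proof amounts to a careful chase through it; the four conditions are simply four ways of stating that the top row $B(G)^\times \twoheadrightarrow B(\mathrm{Res}^G_N)(B(G)^\times) \xrightarrow{\sim} B(\mathcal{F})^\times$ lifts all the way up to $B(N)^\times \cong B(\mathcal{F}')^\times$. I would prove the equivalences as (i)$\Leftrightarrow$(ii), (iii)$\Leftrightarrow$(iv), (ii)$\Rightarrow$(iii), (iv)$\Rightarrow$(i).

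The equivalence (i)$\Leftrightarrow$(ii) is immediate by definition of split surjectivity. For (iii)$\Leftrightarrow$(iv), I would use Proposition \ref{Boucnorm}: since $S \unlhd N$, the restriction $B(\mathrm{Res}^N_S)$ is an isomorphism $B(N)^\times \xrightarrow{\sim} (B(S)^\times)^{\mathrm{Out}_N(S)} = B(\mathcal{F}')^\times$, so its image equals $B(\mathcal{F}')^\times$ unconditionally; therefore (iii) — that this image equals $B(\mathcal{F})^\times$ — is literally the statement (iv). For (ii)$\Rightarrow$(iii), if $B(\mathrm{Res}^G_N)$ is surjective on units then postcomposing with $B(\mathrm{Res}^N_S)$ yields $B(\mathrm{Res}^G_S)(B(G)^\times) = B(\mathrm{Res}^N_S)(B(N)^\times)$; the left-hand side equals $B(\mathcal{F})^\times$ because Theorem \ref{Boltje} gives $B(\mathrm{Res}^G_S)(B(G)) = B(\mathcal{F})$ and this passes to unit groups (since $t^G_S$ is a unital ring morphism, it sends $B(\mathcal{F})^\times$ into $B(G)^\times$, providing preimages of units under restriction).

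For the final implication (iv)$\Rightarrow$(i), I would construct the section explicitly by setting
$$t^G_N := t^G_S \circ B(\mathrm{Res}^N_S) : B(N)^\times \longrightarrow B(G)^\times.$$
The map is well defined into $B(G)^\times$ because under (iv) we have $B(\mathrm{Res}^N_S)(B(N)^\times) = B(\mathcal{F}')^\times = B(\mathcal{F})^\times$, which is the domain of $t^G_S$, and $t^G_S$ preserves units. To verify it is a section, I would use that $B(\mathrm{Res}^N_S) \circ B(\mathrm{Res}^G_N) = B(\mathrm{Res}^G_S)$ together with the defining property $B(\mathrm{Res}^G_S)\circ t^G_S = \mathrm{Id}_{B(\mathcal{F})}$ from Theorem \ref{Boltje}: applying $B(\mathrm{Res}^N_S)$ to $B(\mathrm{Res}^G_N) \circ t^G_N$ returns the identity on $B(N)^\times$, and since $B(\mathrm{Res}^N_S)$ is an isomorphism on units (hence injective), cancelling it yields $B(\mathrm{Res}^G_N) \circ t^G_N = \mathrm{Id}_{B(N)^\times}$.

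There is no real obstacle here beyond bookkeeping; the only minor subtlety is confirming that Theorem \ref{Boltje} upgrades from rings to unit groups, which follows because $t^G_S$ is a unital ring morphism (so it carries units to units and exhibits the required preimages). Everything else is a direct chase around Diagram (\ref{jamisonspic}).
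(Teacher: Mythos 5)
Your proposal is correct and follows essentially the same route as the paper: the same cycle of implications (up to the trivial reshuffling allowed by (i)$\Leftrightarrow$(ii) and (iii)$\Leftrightarrow$(iv)), the same section $t^G_N:=t^G_S\circ B(\mathrm{Res}^N_S)$, and the same appeals to Theorem \ref{Boltje} and Proposition \ref{Boucnorm}. Cancelling $B(\mathrm{Res}^N_S)$ by injectivity is precisely what the paper does by applying $B(\mathrm{Ten}^N_S)$, and your remark that Theorem \ref{Boltje} passes to unit groups because $t^G_S$ is a unital ring morphism makes explicit a step the paper leaves implicit.
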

\begin{proof}
The fact that (i) and (ii) are equivalent is by definition,
and the fact that (iii) and (iv) are equivalent is obvious.
Suppose that (i) holds.
Then if $u\in B(N)^\times,$ we have
$$B(\mathrm{Res}^N_S)(u)
=B(\mathrm{Res}^N_S)((B(\mathrm{Res}^G_N)\circ t^G_N(u)))
=B(\mathrm{Res}^G_S)(t^G_N(u)),$$
which is in $B(\mathcal F)$.
Conversely, if $b\in B(\mathcal F)^\times$,
then $b=B(\mathrm{Res}^G_S)(a)$ for some $a\in B(G)^\times$ by Theorem \ref{Boltje}.
Then $B(\mathrm{Res}^G_N)(a)\in B(N)^\times$,
and $b=B(\mathrm{Res}^N_S)(B(\mathrm{Res}^G_N)(a))$
is in the image of $B(\mathrm{Res}^N_S)$,
showing (iii) holds.

Lastly, suppose that (iii) holds.
We can then define $t^G_N:B(N)^\times\longrightarrow B(G)^\times$
by $t^G_N:=t^G_S\circ B(\mathrm{Res}^N_S)$.
If $u\in B(N)^\times,$ then $B(\mathrm{Res}_S^N)(u)\in B(\mathcal F)^\times$.
So by Theorem \ref{Boltje}, we have
\begin{align*}
B(\mathrm{Res}^N_S)(u)
&=(B(\mathrm{Res}^G_S)\circ t^G_S\circ B(\mathrm{Res}^N_S))(u)\\
&=(B(\mathrm{Res}_S^G)\circ t^G_N)(u)\\
&=(B(\mathrm{Res}^N_S)\circ B(\mathrm{Res}^G_N)\circ t^G_N)(u).
\end{align*}
This lands in $(B(S)^\times)^{\mathrm{Out}_G(S)}=(B(S)^\times)^{\mathrm{Out}_N(S)},$
and so by \ref{Boucnorm} we can apply $B(\mathrm{Ten}^N_S)$ to both sides and get
$u=(B(\mathrm{Res}^G_N)\circ t^G_N)(u),$
thus showing $B(\mathrm{Res}^G_N)\circ t^G_N$
is the identity on $B(N)^\times,$ and therefore (i) holds.
\end{proof}

This proposition is interesting in that it relates control of fusion to a statement
about restricting the unit group of $B(G)$ to a $p$-local subgroup.
The question is then when do these properties hold?
We say that the ``normalizer controls fusion'' if $\mathcal F'=\mathcal F$.
This would of course imply that $B(\mathcal F)=B(\mathcal F')$
and $B(\mathcal F)^\times=B(\mathcal F')^\times$.
So in this case all 4 conditions in Proposition \ref{normalizercon} hold.
We know by a theorem of Burnside (\cite{Linckelmann}, Thm. 1.2)
that if $S$ is abelian, then the normalizer controls fusion.
We explore this case in the final section.
At this point, we leave it as an open question whether or not
the normalizer must control fusion for all these properties to hold.

\section{Units from Maximal Subgroups}\label{MaxUnits}

Here we describe particular units of $B(S)^\times$ indexed by the maximal subgroups of $S$.
These units are constructed in such a way
that it is easy tell if they are contained in $B(\mathcal F)^\times$ for any $\mathcal F$ on $S$.

\begin{proposition}\label{maxunits}
Let $S$ be a 2-group.
For every maximal subgroup $M<S$,
there exists a unit $v_M\in B(S)^\times$
such that for $P\leq S,$ one has $|v_M^P|=-1$ iff $P\leq M$.
Moreover, $\mathrm{Aut}(S)$ acts on the set $\{v_M:M<S\text{ maximal}\}$
via $\alpha\cdot v_{M}=v_{\alpha(M)}$.
\end{proposition}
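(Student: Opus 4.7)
The plan is to construct $v_M$ by inflation from the quotient $S/M \cong C_2$, using a very simple unit of $B(C_2)$. Since $S$ is a $2$-group, every maximal subgroup $M<S$ is normal of index $2$, so $S/M \cong C_2$. I would first observe that in $B(C_2)$ the element
$$v := [C_2/C_2] - [C_2/1]$$
has ghost image $(|v^1|, |v^{C_2}|) = (-1, 1)$; since both coordinates are $\pm 1$, and the image lies in the image of $\phi_{C_2}$ (by construction), $v$ is a unit of $B(C_2)$.

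Next I would define $v_M := B(\mathrm{Inf}^S_{S/M})(v) \in B(S)$. Because inflation is a ring morphism (as recalled in Section \ref{bgBurnside}), it sends units to units, so $v_M \in B(S)^\times$. The ghost-level formula for inflation gives, for any $P \leq S$,
$$|v_M^P| = |v^{PM/M}|,$$
and $PM/M$ is the trivial subgroup of $S/M$ precisely when $P \leq M$, and is all of $S/M$ otherwise. By the computation of $v$ above, this yields $|v_M^P| = -1$ iff $P \leq M$, exactly as claimed.

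For the $\mathrm{Aut}(S)$-action, I would simply transfer everything to the ghost ring, using that the ghost map is $\mathrm{Aut}(S)$-equivariant and injective. For $\alpha \in \mathrm{Aut}(S)$ and $P \leq S$, the formula for $\tilde B(\mathrm{Iso}(\alpha))$ gives
$$|(\alpha \cdot v_M)^P| = |v_M^{\alpha^{-1}(P)}|,$$
which equals $-1$ iff $\alpha^{-1}(P) \leq M$ iff $P \leq \alpha(M)$. Thus $\alpha\cdot v_M$ and $v_{\alpha(M)}$ have identical ghost images, and injectivity of $\phi_S$ forces $\alpha\cdot v_M = v_{\alpha(M)}$.

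There is no real obstacle here: once one spots the correct unit $v$ of $B(C_2)$ and uses the ghost-ring descriptions of inflation and transport-by-isomorphism given in Section \ref{bgBurnside}, everything reduces to bookkeeping about which $P \leq S$ satisfy $PM = M$. The only subtlety worth mentioning is verifying that $v \in B(C_2)$ really is a unit — but this is immediate since its ghost image already has entries $\pm 1$, and every tuple $(\pm1,\dots,\pm1)$ that lies in $\phi_S(B(S))$ automatically gives a unit (its own square is $1$).
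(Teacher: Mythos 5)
Your proposal is correct and follows essentially the same route as the paper: inflate the unit $[C_2/C_2]-[C_2/1]$ of $B(S/M)\cong B(C_2)$ along $\mathrm{Inf}^S_{S/M}$, compute fixed points via the ghost-ring formula for inflation, and verify the $\mathrm{Aut}(S)$-equivariance on ghost images using injectivity of $\phi_S$. The only (welcome) addition is your explicit remark that an element whose ghost image has all entries $\pm1$ squares to the identity and hence is a unit, which the paper leaves implicit.
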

\begin{proof}
For any maximal $M<S,$ we have $S/M\cong C_2$.
Let $u_{S/M}\in B(S/M)^\times$ be defined by $u_{S/M}=[(S/M)/(S/M)]-[(S/M)/(M/M)]$.
Then $|u_{S/M}^{S/M}|=1$ and $|u_{S/M}^{M/M}|=-1$.
We set $v_M:=B(\mathrm{Inf}^S_{S/M})(u_{S/M})$.
Since $B(\mathrm{Inf}^S_{S/M})$ is a ring morphism, we have $v_M\in B(S)^\times$.
Moreover, $\phi_S(v_M)=\tilde B(\mathrm{Inf}^S_{S/M})(\phi_{S/M}(u_{S/M}))$.
Hence for any $P\leq S$, we have
$$|v_M^P|=|u_{S/M}^{PM/M}|=\begin{cases}1,&PM=S\\-1,&PM=M\end{cases}.$$
Notice since $M$ is maximal, $PM=M$ iff $P\leq M$,
and we have the first part of the proposition.

Notice that equivalently since $[S:M]=2,$
we have $\phi_S(v_M)=((-1)^{[PM:M]})_{P\leq S}$.
Now if $\alpha\in\mathrm{Aut}(S)$ and $M<S$ is maximal,
then also $\alpha(M)<S$ is maximal, and we also have $v_{\alpha(M)}\in B(S)^\times$.
We then see that
\begin{align*}
\phi_S(\alpha\cdot v_M)
&=\alpha\cdot\phi_S(v_M)
=\alpha\cdot((-1)^{[PM:M]})_{P\leq S}\\
&=((-1)^{[\alpha^{-1}(P)M:M]})_{P\leq S}
=((-1)^{[P\alpha(M):\alpha(M)]})_{P\leq S}\\
&=\phi_S(v_{\alpha(M)})
\end{align*}
Then since $\phi_S$ is injective, this shows that indeed $\alpha\cdot v_M=v_{\alpha(M)}$.
\end{proof}

We then use these ``maximal units" to uncover fusion properties of the corresponding maximal subgroups.

\begin{proposition}\label{stableclosed}
Let $\mathcal F$ be a saturated fusion system on a 2-group $S$,
and let $M<S$ be a maximal subgroup.
Then $v_M\in B(\mathcal F)^\times$ iff $M$ is strongly closed in $\mathcal F$.
\end{proposition}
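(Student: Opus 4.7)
The plan is to use the explicit formula for the fixed-point values of $v_M$ from Proposition \ref{maxunits}, namely that $|v_M^P| = -1$ if $P \leq M$ and $|v_M^P| = +1$ otherwise, and translate the membership condition $v_M \in B(\mathcal F)^\times$ into a statement about which subgroups of $S$ are contained in $M$. Since $B(\mathcal F)^\times = B(\mathcal F) \cap B(S)^\times$, the unit $v_M$ lies in $B(\mathcal F)^\times$ precisely when $|v_M^P| = |v_M^Q|$ for every pair of $\mathcal F$-conjugate subgroups $P, Q \leq S$; by the formula, this is equivalent to saying $P \leq M$ iff $Q \leq M$ whenever $P \cong_{\mathcal F} Q$.

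For the forward implication, I would assume $v_M \in B(\mathcal F)^\times$ and take any $x \in M$ together with any $y \in x^{\mathcal F}$. By definition of $\mathcal F$-conjugacy of elements, there is some $\psi \in \mathrm{Iso}_{\mathcal F}(\langle x \rangle, \langle y \rangle)$ with $\psi(x) = y$, so $\langle x \rangle$ and $\langle y \rangle$ are $\mathcal F$-conjugate subgroups. Since $\langle x \rangle \leq M$ gives $|v_M^{\langle x \rangle}| = -1$, the stability of $v_M$ forces $|v_M^{\langle y \rangle}| = -1$, hence $\langle y \rangle \leq M$ and $y \in M$. Thus $x^{\mathcal F} \subseteq M$ for every $x \in M$, i.e., $M$ is strongly closed.

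For the converse, I would assume $M$ is strongly closed and let $P \cong_{\mathcal F} Q$ via some $\varphi \in \mathrm{Iso}_{\mathcal F}(P,Q)$. If $P \leq M$, then for each $x \in P$, the restriction $\varphi|_{\langle x \rangle}$ is an $\mathcal F$-isomorphism sending $x$ to $\varphi(x)$, so $\varphi(x) \in x^{\mathcal F} \subseteq M$ by strong closure; hence $Q = \varphi(P) \leq M$. By symmetry (applying the same argument to $\varphi^{-1}$), $Q \leq M$ implies $P \leq M$, so $|v_M^P| = |v_M^Q|$ in every case, and $v_M \in B(\mathcal F)^\times$.

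There is no real obstacle here; the proof is essentially a direct unwinding of the two definitions, with the only mild subtlety being the observation that $\mathcal F$-conjugacy of subgroups passes to $\mathcal F$-conjugacy of their elements and vice versa via the restriction of the ambient isomorphism.
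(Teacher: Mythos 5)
Your proposal is correct and follows essentially the same route as the paper's proof: the forward direction via cyclic subgroups $\langle x\rangle$, $\langle y\rangle$ is identical, and your converse is the same direct unwinding (the paper handles $P\nleq M$ by picking $y\in P\setminus M$ and noting $\varphi(y)\notin M$, where you instead invoke symmetry via $\varphi^{-1}$, an equivalent step). No gaps.
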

\begin{proof}
First assume that $v_M\in B(\mathcal F)^\times$.
Let $x\in M$ and $y\in x^{\mathcal F}$.
Then there exists a $\varphi\in\mathrm{Iso}_{\mathcal F}(\langle x\rangle,\langle y\rangle)$
such that $\varphi(x)=y$.
Since $x\in M$, we have $|v_M^{\langle x\rangle}|=-1$.
And since $v_m\in B(\mathcal F)^\times,$ we have $|v_M^{\langle x\rangle}|=|v_M^{\langle y\rangle}|$.
Hence $|v_M^{\langle y\rangle}|=-1$ as well, and thus $y\in M$.
So we have shown $x$ is not $\mathcal F$-conjugate to any element of $S$ outside of $M$.
Therefore $M$ is strongly closed in $\mathcal F$.

Conversely, assume that $M$ is strongly closed in $\mathcal F$.
Let $P\leq S$ and $\varphi\in\mathrm{Hom}_{\mathcal F}(P,S)$.
If $P\leq M$, then $|v_M^P|=-1$.
And for all $x\in P,$ we must have $\varphi(x)\in M$ since $M$ is strongly closed.
Hence $\varphi(P)\leq M$ as well, and we have $|v_M^{\varphi(P)}|=-1=|v_M^P|$.
On the other hand, if $P\nleq M,$ then $|v_M^P|=1$.
Also there exists an $y\in P$ such that $y\notin M$.
And $\varphi(y)\notin M$ either, since $M$ is strongly closed.
Hence $\varphi(P)\nleq M$, and $|v_M^{\varphi(P)}|=1=|v_M^P|$.
In either case, we have $|v_M^P|=|v_M^{\varphi(P)}|$
for all $P\leq S$ and $\varphi\in\mathrm{Hom}_{\mathcal F}(P,S)$.
Therefore $v_M\in B(\mathcal F)^\times$.
\end{proof}

We can then use the fact that if a subgroup of $S$ is abelian,
then being strongly closed in $\mathcal F$
is equivalent to being normal in $\mathcal F$.

\begin{corollary}\label{maxstable}
Let $\mathcal F$ be a saturated fusion system on a 2-group $S$,
and suppose that $M$ is a maximal subgroup of $S$ that is abelian.
Then $v_M\in B(\mathcal F)^\times$ iff $M\unlhd\mathcal F$.
In this case,
the only possible $\mathcal F$-essential subgroups are $M$ and/or $S$.
\end{corollary}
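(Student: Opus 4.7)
The plan is to chain Proposition~\ref{stableclosed} with two standard facts from the theory of saturated fusion systems. For the biconditional, Proposition~\ref{stableclosed} already gives that $v_M\in B(\mathcal{F})^\times$ if and only if $M$ is strongly closed in $\mathcal{F}$. I would then invoke the fact advertised in the sentence immediately preceding the corollary, namely that an abelian subgroup of $S$ is strongly closed in $\mathcal{F}$ if and only if it is normal in $\mathcal{F}$ (see \cite{AKO}, Part I). The reverse implication (normal implies strongly closed) is built into the definition of normality, while the forward implication uses that abelianness of $M$ allows one to extend any $\mathcal{F}$-morphism $\varphi: P\to Q$ to one fixing $M$ setwise, since strong closure controls $\varphi$ on $P\cap M$ and abelianness allows the extension to $M$ to be taken as the identity. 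Combining this with Proposition~\ref{stableclosed} gives $v_M\in B(\mathcal{F})^\times\iff M\unlhd\mathcal{F}$.

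For the second claim, I would assume $M\unlhd\mathcal{F}$ and let $P$ be an arbitrary $\mathcal{F}$-essential subgroup; the goal is to show $M\leq P$, at which point maximality of $M$ in $S$ forces $P=M$ or $P=S$. By definition every $\mathcal{F}$-essential subgroup is $\mathcal{F}$-centric, so $C_S(P)\leq P$. I would then invoke the standard fact that $O_p(\mathcal{F})$, the largest subgroup of $S$ that is normal in $\mathcal{F}$, is contained in every $\mathcal{F}$-centric subgroup of $S$ (see \cite{AKO}, Part I). Since $M\unlhd\mathcal{F}$ yields $M\leq O_p(\mathcal{F})$, we obtain $M\leq O_p(\mathcal{F})\leq P$ as desired.

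The only real obstacle is bookkeeping: making sure the two structural inputs (``abelian strongly closed equals normal in $\mathcal{F}$'' and ``$O_p(\mathcal{F})$ sits inside every $\mathcal{F}$-centric subgroup'') are quoted in precisely the form needed. Both are standard in the saturated setting assumed throughout the paper, and neither requires auxiliary hypotheses such as full normalization beyond what is already built into the definition of an essential subgroup. Once they are in hand, the corollary follows in a few lines.
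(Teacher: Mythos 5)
The first half of your argument is fine in substance: you reduce to Proposition~\ref{stableclosed} and then quote the equivalence ``abelian and strongly closed $\iff$ normal in $\mathcal F$'' from \cite{AKO}, which is exactly what the paper does (it cites Corollary~4.7 there). I would only caution that your parenthetical sketch of the forward direction --- ``abelianness allows the extension to $M$ to be taken as the identity'' --- is not how that proof goes (if $P\leq M$ the extension must restrict to $\varphi$ on $P$, so it cannot be the identity on $M$), but since you are deferring to the reference this does not damage the argument.

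The second half, however, rests on a false statement. You claim as a standard fact that $O_p(\mathcal F)$ is contained in every $\mathcal F$-centric subgroup of $S$. This fails already for the trivial fusion system $\mathcal F=\mathcal F_{D_8}(D_8)$: here $O_2(\mathcal F)=D_8$, but a Klein four-subgroup $V\leq D_8$ satisfies $C_{D_8}(V)=V$ and hence is $\mathcal F$-centric, while $D_8\not\leq V$. The correct statement requires the subgroup to be $\mathcal F$-centric \emph{and} $\mathcal F$-radical (the usual argument: the image of $\Aut_{QP}(P)$ in $\Out_{\mathcal F}(P)$ is a normal $p$-subgroup, hence trivial when $P$ is radical, and then centricity forces $N_{QP}(P)=P$ and so $Q\leq P$). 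Essential subgroups are both centric and radical, so your desired conclusion $M\leq P$ is still true, and this combined statement is what the paper cites (Proposition~4.5 of \cite{AKO}). As written, though, your intermediate lemma is not a fact, so you need to add the radical hypothesis (or cite the containment in essential subgroups directly) to close the gap.
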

\begin{proof}
By the proposition $v_M\in B(\mathcal F)^\times$ iff $M$ is strongly closed in $\mathcal F$.
By Corollary 4.7 in \cite{AKO}, this is equivalent to $M\unlhd\mathcal F$
given that $M$ is additionally abelian.
Then by Proposition 4.5 of that article,
$M$ must be contained in every $\mathcal F$-essential subgroups of $S$.
Since $M$ is maximal, $M$ and $S$ are the only possible essential subgroups.
\end{proof}

\begin{corollary}
Let $\mathcal F$ be a saturated fusion system on a 2-group $S$.
If $v_M\in B(\mathcal F)^\times$ for all maximal $M<S$,
then the Frattini subgroup $\Phi(S)$ is strongly closed in $\mathcal F$.
Additionally, if $\Phi(S)$ is abelian, then $\Phi(S)\unlhd\mathcal F$.
\end{corollary}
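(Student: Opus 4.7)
The plan is to reduce this corollary to Proposition \ref{stableclosed} together with the elementary observation that an intersection of strongly closed subgroups remains strongly closed, and then to invoke the same abelian-implies-normal result used in Corollary \ref{maxstable}.

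First I would recall that, by definition, the Frattini subgroup of a $p$-group $S$ is the intersection of all maximal subgroups:
$$\Phi(S)=\bigcap_{M<S\text{ maximal}}M.$$
The hypothesis says $v_M\in B(\mathcal F)^\times$ for every maximal $M<S$, and Proposition \ref{stableclosed} translates this directly into the statement that every maximal subgroup of $S$ is strongly closed in $\mathcal F$.

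Next I would verify the general fact that an intersection of strongly closed subgroups is strongly closed. This is a one-line argument: if $\{M_i\}_{i\in I}$ is a family of strongly closed subgroups of $S$ and $x\in\bigcap_iM_i$, then for any $y\in x^{\mathcal F}$ and any $i\in I$, strong closure of $M_i$ together with $x\in M_i$ forces $y\in M_i$, so $y\in\bigcap_iM_i$. Applying this to the family of all maximal subgroups of $S$ yields that $\Phi(S)$ is strongly closed in $\mathcal F$, which is the first claim.

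For the second claim, I would mirror the final step of Corollary \ref{maxstable}: given that $\Phi(S)$ is strongly closed and abelian, Corollary 4.7 of \cite{AKO} gives $\Phi(S)\unlhd\mathcal F$. There really is no main obstacle here; the only thing worth double-checking is that the intersection argument does not require anything beyond the pointwise definition of strongly closed (it does not), so the proof is essentially a clean assembly of Proposition \ref{stableclosed}, the intersection lemma, and the abelian-strongly-closed criterion from \cite{AKO}.
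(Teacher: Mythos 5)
Your proposal is correct and follows essentially the same route as the paper: both deduce from Proposition \ref{stableclosed} that every maximal subgroup is strongly closed, observe that $\Phi(S)$ is their intersection and hence strongly closed (the paper just inlines your intersection lemma for this particular family), and cite Corollary 4.7 of \cite{AKO} for the abelian case. No gaps.
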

\begin{proof}
Assume $v_M\in B(\mathcal F)^\times$ for all maximal $M<S$.
Then each maximal subgroup is strongly closed in $\mathcal F$ by the proposition.
If $x\in\Phi(S)$,
then $x\in M$ for all maximal $M$.
Since $M$ is strongly closed, $x^{\mathcal F}\subseteq M$.
Hence $x^{\mathcal F}\subseteq\Phi(S)$,
showing that $\Phi(S)$ is strongly closed in $\mathcal F$.
The last statement follows from Corollary 4.7 of \cite{AKO}.
\end{proof}

\section{The Abelian Case}\label{Abelian}

In this section, we assume that $S$ is an abelian 2-group
and that $\mathcal F$ is a saturated fusion system on $S$.
We then know a nice basis of $B(S)^\times$ to start.
We state the result of Theorem 8.5 from \cite{Bouc} in this special case.

\begin{proposition}\label{abbasis}
Let $S$ be an abelian 2-group.
Then $B(S)^\times$ has an $\mathbb F_2$-basis given by $\{-1\}\cup\{v_M:M<S\text{ maximal}\}$.
\end{proposition}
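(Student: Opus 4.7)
The plan is to show that $\{-1\}\cup\{v_M:M<S\text{ maximal}\}$ is both $\mathbb{F}_2$-linearly independent in $B(S)^\times$ and has the correct cardinality to span it, where the spanning step is where the cited result of Bouc does the heavy lifting.

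For linear independence, I would pass through the injective ghost map $\phi_S:B(S)\longrightarrow\tilde B(S)$ and work in the ambient $\mathbb{F}_2$-vector space $\tilde B(S)^\times\subseteq\prod_{P\leq S}\{\pm1\}$ (here the outer $S$-action is trivial since $S$ is abelian). Suppose a relation $(-1)^\epsilon\prod_{M}v_M^{a_M}=1$ holds with $\epsilon,a_M\in\mathbb{F}_2$. Evaluating the marks at $P=S$: for every maximal $M$ we have $S\not\leq M$, so $|v_M^S|=1$, while $|(-1)^S|=-1$. This forces $\epsilon=0$. Next, for any maximal $M_0<S$, Proposition \ref{maxunits} gives $|v_M^{M_0}|=-1$ iff $M_0\leq M$, which among maximal subgroups forces $M=M_0$. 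Evaluating the relation at $P=M_0$ then yields $(-1)^{a_{M_0}}=1$, so $a_{M_0}=0$. Hence only the trivial relation holds.

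For spanning, I would do a dimension count. Since $S$ is an abelian $2$-group, the Burnside basis theorem gives $S/\Phi(S)\cong\mathbb{F}_2^r$ where $r$ is the minimal number of generators of $S$, and the maximal subgroups of $S$ are in bijection with the hyperplanes of $S/\Phi(S)$, of which there are $2^r-1$. Thus the candidate basis has cardinality $1+(2^r-1)=2^r$. The remaining input is that $\dim_{\mathbb{F}_2}B(S)^\times=2^r$ in this abelian setting, which is exactly what Theorem 8.5 of \cite{Bouc} provides: Bouc's general basis of $B(S)^\times$ is indexed by a set of \emph{genetic} subgroups of $S$ together with a sign datum, and when $S$ is abelian the $\mathcal F$-conjugacy/normalizer conditions collapse so that the genetic indexing set reduces to the sections of $S$ corresponding to $S$ itself (contributing $-1$) and to each maximal $M$ (contributing $v_M$). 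Combining linear independence with the equality of cardinalities yields the basis.

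The main obstacle is the dimension count, which is not something one can see quickly from the definition of $B(S)^\times$; it genuinely requires Bouc's machinery (or equivalent techniques such as analyzing the cokernel of $\phi_S$ modulo $2$). The linear-independence half, on the other hand, is elementary and uses nothing about $S$ beyond the fact that the fixed-point pattern of $v_M$ on maximal subgroups is a Kronecker delta against $|(-1)^{\,\cdot\,}|=-1$, giving an upper-triangular evaluation matrix.
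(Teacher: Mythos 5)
Your proposal is correct and rests on the same essential input as the paper, which offers no independent proof but simply quotes Theorem 8.5 of \cite{Bouc} in the abelian case (where the genetic subgroups of relevant type are exactly $S$ and its maximal subgroups, as you say). Your elementary triangular evaluation argument for linear independence (marks at $S$ kill $\epsilon$, marks at each maximal $M_0$ kill $a_{M_0}$) is a sound addition that lets you use only the cardinality of Bouc's basis rather than identifying his basis elements with the $v_M$, but it is not a different route to the result.
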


We then use this basis of $B(S)^\times$ to create a basis of $B(\mathcal F)^\times$
for any saturated fusion system $\mathcal F$ on $S$.
This allows us to compute $\dim_{\mathbb F_2}B(\mathcal F)^\times$.

\begin{theorem}\label{abrank}
Let $\mathcal F$ be a saturated fusion system on an abelian groups $S$.
Then $\dim_{\mathbb F_2}B(\mathcal F)^\times=s+1$,
where $s$ is the number of $\mathcal F$-conjugacy classes of maximal subgroups of $S$.
\end{theorem}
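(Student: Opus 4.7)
The plan is to reduce the computation to counting orbits of a permutation action on the explicit $\mathbb F_2$-basis of $B(S)^\times$ given in Proposition \ref{abbasis}, using the action described in Proposition \ref{maxunits}. First I would observe that since $S$ is abelian, every subgroup $P\leq S$ satisfies $C_S(P)=S$, so the only $\mathcal F$-centric subgroup of $S$ is $S$ itself. Since $\mathcal F$-essential subgroups are by definition $\mathcal F$-centric, it follows that $\mathcal F$ has no proper $\mathcal F$-essential subgroups. Applying the corollary to Theorem \ref{kernels} then gives
$$B(\mathcal F)^\times = (B(S)^\times)^{\mathrm{Out}_{\mathcal F}(S)} = (B(S)^\times)^{\mathrm{Aut}_{\mathcal F}(S)},$$
where the second equality uses $\mathrm{Inn}(S)=1$.

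Next, by Proposition \ref{abbasis}, the set $\mathcal B := \{-1\} \cup \{v_M : M<S \text{ maximal}\}$ is an $\mathbb F_2$-basis of $B(S)^\times$. By Proposition \ref{maxunits}, the action of any $\alpha \in \mathrm{Aut}_{\mathcal F}(S) \leq \mathrm{Aut}(S)$ on $B(S)^\times$ preserves $\mathcal B$: it fixes $-1$ and sends $v_M$ to $v_{\alpha(M)}$. For any permutation representation on a basis of an $\mathbb F_2$-vector space, a vector is fixed iff its coordinates are constant on orbits; therefore the dimension of $(B(S)^\times)^{\mathrm{Aut}_{\mathcal F}(S)}$ equals the number of $\mathrm{Aut}_{\mathcal F}(S)$-orbits on $\mathcal B$.

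It remains to count these orbits. The element $-1$ forms a singleton orbit, contributing $1$ to the dimension. Via the bijection $v_M \leftrightarrow M$, the orbits on $\{v_M : M<S \text{ maximal}\}$ are in bijection with the $\mathrm{Aut}_{\mathcal F}(S)$-orbits on maximal subgroups of $S$. Because $\mathcal F$ has no proper $\mathcal F$-essential subgroups, Alperin's fusion theorem (as used in the proof of Theorem \ref{stablecheck}) implies that every isomorphism in $\mathcal F$ between subgroups of $S$ is the restriction of some element of $\mathrm{Aut}_{\mathcal F}(S)$. Consequently, two maximal subgroups of $S$ are $\mathcal F$-conjugate iff they lie in the same $\mathrm{Aut}_{\mathcal F}(S)$-orbit, yielding exactly $s$ orbits on the $v_M$'s. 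Summing gives $s+1$ orbits in total, and thus $\dim_{\mathbb F_2} B(\mathcal F)^\times = s+1$.

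The only real subtlety lies in matching $\mathrm{Aut}_{\mathcal F}(S)$-orbits with $\mathcal F$-conjugacy classes of maximal subgroups; this is exactly where abelianness of $S$ enters, because it collapses all $\mathcal F$-fusion in $S$ to the action of $\mathrm{Aut}_{\mathcal F}(S)$ via Alperin. Once that identification is in place, the rest is a transparent orbit count on a permutation basis.
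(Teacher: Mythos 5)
Your proof is correct, and its skeleton is the same as the paper's: reduce to $(B(S)^\times)^{\mathrm{Out}_{\mathcal F}(S)}$ using the absence of proper $\mathcal F$-essential subgroups, then exploit the basis of Proposition \ref{abbasis} together with the permutation action $\alpha\cdot v_M=v_{\alpha(M)}$ from Proposition \ref{maxunits}. Where you diverge is the final count: the paper pushes the basis of $B(S)^\times$ through the trace map $\mathrm{tr}^{\mathcal F}_S$, invoking the split surjectivity of Proposition \ref{splitsurj} to get a spanning set and then computing $\mathrm{tr}^{\mathcal F}_S(v_M)=\prod_{N\in M^{\mathcal F}}v_N$ (using that the stabilizer of $v_M$ has odd order), whereas you simply observe that $\mathrm{Aut}_{\mathcal F}(S)$ permutes the basis $\{-1\}\cup\{v_M\}$ and that the fixed subspace of a permutation module has the orbit sums (here, orbit products) as a basis. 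Your route is slightly more elementary --- it needs neither Proposition \ref{splitsurj} nor the odd-order-of-stabilizer observation --- though of course it lands on exactly the same basis $\{-1,\prod_{N\in M_1^{\mathcal F}}v_N,\dots,\prod_{N\in M_s^{\mathcal F}}v_N\}$. Two small points in your favor: you justify the claim that an abelian $S$ has no proper essential subgroups (via centricity), which the paper merely asserts, and you make explicit the identification of $\mathrm{Aut}_{\mathcal F}(S)$-orbits of maximal subgroups with $\mathcal F$-conjugacy classes via Alperin's fusion theorem, a step the paper uses implicitly when it writes the trace of $v_M$ as a product over $M^{\mathcal F}$.
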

\begin{proof}
Since $S$ is abelian, $S$ has no proper $\mathcal F$-essential subgroups.
By Theorem \ref{kernels}, we see that
$B(\mathcal F)^\times=\mathrm{Ker}(d^{\mathcal F}_S)=(B(S)^\times)^{\mathrm{Out}_{\mathcal F}(S)}$.
By Proposition \ref{abbasis}, we have a basis $\{-1\}\cup\{v_M:M<S\text{ maximal}\}$ of $B(S)^\times$.
Then Proposition \ref{splitsurj} shows that
$\mathrm{tr}^{\mathcal F}_S:B(S)^\times\longrightarrow(B(S)^\times)^{\mathrm{Out}_{\mathcal F}(S)}$
is split surjective.
But since $B(\mathcal F)^\times=(B(S)^\times)^{\mathrm{Out}_{\mathcal F}(S)},$
this means that $\{\mathrm{tr}^{\mathcal F}_S(-1)\}\cup\{\mathrm{tr}^{\mathcal F}_S(v_M):M<S\text{ maximal}\}$ is a spanning set of $B(\mathcal F)^\times$.
Clearly $-1\in B(\mathcal F)^\times$ and $\mathrm{tr}^{\mathcal F}_S(-1)=-1$.
And by Proposition \ref{maxunits},
we know that $\mathrm{Aut}_{\mathcal F}(S)$ acts on the set of the $v_M$.
Hence so does $\mathrm{Out}_{\mathcal F}(S)$
via $\bar\alpha\cdot v_M=v_{\alpha(M)}$.
So if $M<S$ is a maximal subgroup of $S$
and we denote the stabilizer of $v_M$ by $\mathrm{Out}_{\mathcal F}(S)_M$,
we note that $2\nmid|\mathrm{Out}_{\mathcal F}(S)_M|,$
and so
\begin{align*}
\mathrm{tr}^{\mathcal F}_S(v_M)
&=\prod_{\bar\alpha\in\mathrm{Out}_{\mathcal F}(S)}(\bar\alpha\cdot v_M)
=\prod_{\bar\alpha\in\mathrm{Out}_{\mathcal F}(S)}v_{\alpha(M)}\\
&=\prod_{\bar\alpha\in\mathrm{Out}_{\mathcal F}(S)/\mathrm{Out}_{\mathcal F}(S)_M}v_{\alpha(M)}^{|\mathrm{Out}_{\mathcal F}(S)_M|}
=\prod_{\bar\alpha\in\mathrm{Out}_{\mathcal F}(S)/\mathrm{Out}_{\mathcal F}(S)_M}v_{\alpha(M)}\\
&=\prod_{N\in M^{\mathcal F}}v_N
\end{align*}
This implies then that if $M_1,\dots,M_s$ is a set of representatives
of the $\mathcal F$-conjugacy classes of maximal subgroups of $S$,
we have a basis of $B(\mathcal F)^\times$ given by
$\{-1,\mathrm{tr}^{\mathcal F}_S(v_{M_1}),\dots,\mathrm{tr}^{\mathcal F}_S(v_{M_s})\}$.
Thus $\dim_{\mathbb F_2}B(\mathcal F)^\times=s+1$ as claimed.
\end{proof}

When we have a saturated fusion system $\mathcal F$ on an abelian group $S$,
we can then detect if $\mathcal F$ is trivial just by computing $B(\mathcal F)^\times$.
This may seem somewhat circular at first,
but computing $B(\mathcal F)^\times$ using GAP, for instance,
can be simple when $\mathcal F$ is a Frobenius fusion system
given by a large finite group with small Sylow $p$-subgroup
and the $\mathcal F$-automorphism groups are not clear.

\begin{corollary}\label{abtriv}
Let $S$ be an abelian 2-group and $\mathcal F$ a saturated fusion system on $S$.
Then $B(\mathcal F)^\times=B(S)^\times$ iff $\mathcal F=\mathcal F_S(S)$.
\end{corollary}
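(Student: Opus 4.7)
The plan is to verify the biconditional direction by direction. The forward direction $\mathcal F=\mathcal F_S(S)\Rightarrow B(\mathcal F)^\times=B(S)^\times$ is immediate: since $S$ is abelian, $\mathrm{Inn}(S)=1$, so every morphism in $\mathcal F_S(S)$ is an inclusion and every $\mathcal F$-conjugacy class of subgroups is a singleton, making the fusion-stability condition vacuous and giving $B(\mathcal F)=B(S)$ outright.

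For the converse, my strategy is to squeeze the hypothesis through the rank formula in Theorem \ref{abrank}. Combined with Proposition \ref{abbasis}, which gives $\dim_{\mathbb F_2}B(S)^\times = m+1$ where $m$ is the number of maximal subgroups of $S$, the equality $B(\mathcal F)^\times=B(S)^\times$ forces $s=m$, i.e., each maximal subgroup of $S$ is alone in its $\mathcal F$-conjugacy class. Since $S$ is abelian it has no proper $\mathcal F$-essential subgroups, so Alperin's Fusion Theorem tells us every $\mathcal F$-isomorphism between subgroups is a restriction of some $\alpha\in\mathrm{Aut}_{\mathcal F}(S)$. In particular, the $\mathcal F$-conjugacy classes of maximal subgroups coincide with the $\mathrm{Aut}_{\mathcal F}(S)$-orbits on maximal subgroups, so our hypothesis becomes: $\mathrm{Aut}_{\mathcal F}(S)$ fixes every maximal subgroup setwise.

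It then remains to conclude $\mathrm{Aut}_{\mathcal F}(S)=1$, after which Alperin will give $\mathcal F=\mathcal F_S(S)$. Saturation makes $S$ fully $\mathcal F$-automized, and since $\mathrm{Aut}_S(S)=\mathrm{Inn}(S)=1$ must be a Sylow 2-subgroup of $\mathrm{Aut}_{\mathcal F}(S)$, the latter has odd order. On the other hand, if $\alpha\in\mathrm{Aut}_{\mathcal F}(S)$ stabilizes every maximal subgroup of $S$ setwise, then for every $s\in S$ the elements $s$ and $\alpha(s)$ lie in exactly the same collection of maximal subgroups, so they have the same image in the $\mathbb F_2$-vector space $S/\Phi(S)$ (a vector there is determined by the set of hyperplanes containing it). Hence $\alpha$ lies in the kernel of the natural map $\mathrm{Aut}(S)\to\mathrm{Aut}(S/\Phi(S))$; this kernel is a 2-group by a standard coprime-action fact, so the odd-order element $\alpha$ must be trivial.

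The main obstacle is minor and lies in the middle step: cleanly identifying $\mathcal F$-conjugacy classes of maximal subgroups with $\mathrm{Aut}_{\mathcal F}(S)$-orbits. This is a straightforward application of Alperin's theorem in the absence of proper essentials, but it is the bridge that turns the numerical equality from Theorem \ref{abrank} into a structural statement about $\mathrm{Aut}_{\mathcal F}(S)$. Once that bridge is built, the linear-algebra observation over $\mathbb F_2$ and the odd-order-in-a-2-group observation finish things quickly.
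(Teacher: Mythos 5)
Your proof is correct, and it takes a genuinely different route from the paper's. The paper argues the contrapositive directly: if $\mathcal F$ is nontrivial, then by Linckelmann some $\mathrm{Aut}_{\mathcal F}(P)$ contains an element of odd order, which (since $S\unlhd\mathcal F$ when $S$ is abelian) extends to an odd-order $\psi\in\mathrm{Aut}_{\mathcal F}(S)$ acting nontrivially on $S/\Phi(S)$ and hence moving some maximal subgroup $M$ to a different maximal subgroup $N$; Proposition \ref{stableclosed} then shows $v_M,v_N\notin B(\mathcal F)^\times$. You instead run the dimension count of Theorem \ref{abrank} against Proposition \ref{abbasis} to force every maximal subgroup to be alone in its $\mathcal F$-class, identify those classes with $\mathrm{Aut}_{\mathcal F}(S)$-orbits (valid via Alperin's theorem in the absence of proper essential subgroups, or equivalently via $S\unlhd\mathcal F$), and then kill $\mathrm{Aut}_{\mathcal F}(S)$ by combining its odd order (full automization plus $\mathrm{Inn}(S)=1$) with the observation that an automorphism stabilizing every maximal subgroup setwise acts trivially on $S/\Phi(S)$ and hence lies in the $2$-group kernel of $\mathrm{Aut}(S)\to\mathrm{Aut}(S/\Phi(S))$. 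Both arguments rest on the same two ingredients --- Burnside's theorem that this kernel is a $2$-group, and the correspondence between maximal subgroups of $S$ and hyperplanes of the $\mathbb F_2$-space $S/\Phi(S)$ --- but yours derives the corollary as a consequence of the rank formula just proved, while the paper's is independent of Theorem \ref{abrank} and instead exhibits explicit non-stable units. Your bridge step (classes of maximal subgroups equal $\mathrm{Aut}_{\mathcal F}(S)$-orbits) is the only place requiring care and is handled soundly; note that your argument in fact proves the stronger statement $\mathrm{Aut}_{\mathcal F}(S)=1$, from which $\mathcal F=\mathcal F_S(S)$ follows by Alperin, whereas the paper only needs to produce a single unit of $B(S)^\times$ outside $B(\mathcal F)^\times$.
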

\begin{proof}
That $\mathcal F=\mathcal F_S(S)$ implies $B(\mathcal F)^\times=B(S)^\times$ is obvious.
Let us assume that $\mathcal F$ is not the trivial fusion system on $S$.
Then by \cite{Linckelmann} Theorem 3.11,
this means there exists $P\leq S$ such that $\mathrm{Aut}_{\mathcal F}(P)$ is not a 2-group.
Let us pick $\varphi\in\mathrm{Aut}_{\mathcal F}(P)$ of odd order.
Also by Theorem 3.8 of the same article, since $S$ is abelian, $S\unlhd F$.
Hence $\varphi$ extends to a $\psi\in\mathrm{Aut}_{\mathcal F}(S)$.
We can also assume that $\psi$ has odd order.
Let $\bar\psi$ denote the image of $\psi$ under the map
$\mathrm{Aut}(S)\rightarrow\mathrm{Aut}(S/\Phi(S))$.
By a result of Burnside (see Theorem 5.1.14 of \cite{Gorenstein}, for instance),
we know that the kernel of this map is a 2-group.
Thus $\bar\psi$ is nontrivial of odd order.
Then since $S/\Phi(S)$ is an elementary abelian 2-group,
there exists distinct maximal subgroups $M/\Phi(S),N/\Phi(S)$ of $S/\Phi(S)$
such that $\bar\psi(M/\Phi(S))=N/\Phi(S)$.
Then $M$ and $N$ are distinct maximal subgroups of $S$ such that $\psi(M)=N$.
Thus neither are strongly closed in $\mathcal F$,
and by Proposition \ref{stableclosed},
we have $v_M,v_N\notin B(\mathcal F)^\times$.
Hence $B(\mathcal F)^\times<B(S)^\times$.
\end{proof}

Notice when $S$ is not abelian,
we cannot assume that $S\unlhd\mathcal F$.
So an odd order $\varphi\in\mathrm{Aut}_{\mathcal F}(P)$ might not extend to $S$ for some $P<S$.
So even though we can find maximal subgroups of $P$ fused by $\mathcal F$,
they may be contained in the same maximal subgroups of $S$,
and we cannot use Proposition \ref{stableclosed}
to find non-$\mathcal F$-stable units of $B(S)^\times$ in this way.
However, by Corollary \ref{maxstable},
we see that $S$ could have at most one abelian maximal subgroup.
So this limits the search for a counterexample to Corollary \ref{abtriv}
for a nonabelian 2-group,
but at this point, we do not know such a counterexample.


\end{document}